% Submitted to arxiv
% Copied from 'draft3-????-??-??' on 18:th of June, 2008
\documentclass[a4paper, 12pt]{article}
\usepackage[sumlimits]{amsmath}
\usepackage{amsthm}
\usepackage{amssymb}
\usepackage{amsfonts}
\usepackage{graphicx}
\usepackage[latin1]{inputenc}
\usepackage{enumerate}
\usepackage{subfigure}

\newcommand{\bbC}{\mathbb{C}}
\newcommand{\bbZ}{\mathbb{Z}}

\newcommand{\epi}{\twoheadrightarrow}

\newcommand{\defeq}{\mathrel{\mathop:}=}

\renewcommand{\phi}{\varphi}
\renewcommand{\epsilon}{\varepsilon}

\newcommand{\scr}[1]{\ensuremath{\mathcal{#1}}}

\newcommand{\calD}{\scr D}
\newcommand{\calL}{\scr L}
\newcommand{\calH}{\scr H}
\newcommand{\catO}{\scr O}

\newcommand{\calR}{\scr R}

\newcommand{\enva}[1]{\scr U(#1)}

\newcommand{\deriv}[1]{\scr D^b(#1)}
\newcommand{\homot}[1]{\scr K^b(#1)}

\newcommand{\roots}{R}

\newtheorem{proposition}{Proposition}[section]
\newtheorem{lemma}[proposition]{Lemma}
\newtheorem{theorem}[proposition]{Theorem}
\newtheorem{corollary}[proposition]{Corollary}

\theoremstyle{definition}

\DeclareMathOperator{\Supp}{Supp}
\DeclareMathOperator{\Ann}{Ann}

\DeclareMathOperator{\Hom}{Hom}
\DeclareMathOperator{\Ext}{Ext}
\DeclareMathOperator{\mindeg}{mindeg}

\DeclareMathOperator{\Ind}{Ind}
\DeclareMathOperator{\Res}{Res}

\DeclareMathOperator{\pr}{pr}
%%%% Clap For comparison, the existing overlap macros:
% \def\llap#1{\hbox to 0pt{\hss#1}}
% \def\rlap#1{\hbox to 0pt{#1\hss}}
\def\clap#1{\hbox to 0pt{\hss#1\hss}}

\renewcommand{\frak}{\mathfrak}
\newcommand{\luszta}{\mathbf{a}}
\newcommand{\inv}{{-1}}

%%%%

\newcommand{\wcirc}{{w_\circ^{\phantom{I}}}}
\newcommand{\wIcirc}{{w_\circ^I}}

\begin{document}

\title{Kostant's problem and parabolic subgroups}

\author{Johan K\aa hrstr\"om}

\maketitle

%%%%%%%%%%%%%%%%%%%%%%%%%%%%%%%%%%%%%%%%%%%%%%%%%%%%%%%%%%%%%%%%%%%%%%%%%%%%%%%
%%%%%%%%%%%%%%%%%%%%%%%%%%%%%%%%%%%%%%%%%%%%%%%%%%%%%%%%%%%%%%%%%%%%%%%%%%%%%%%
%%%%%%%%%%%%%%%%%%%%%%%%%%%%%%%%%%%%%%%%%%%%%%%%%%%%%%%%%%%%%%%%%%%%%%%%%%%%%%%
%%%%%%%%%%%%%%%%%%%%%%%%%%%%%%%%%%%%%%%%%%%%%%%%%%%%%%%%%%%%%%%%%%%%%%%%%%%%%%%
%%%%%%%%%%%%%%%%%%%%%%%%%%%%%%%%%%%%%%%%%%%%%%%%%%%%%%%%%%%%%%%%%%%%%%%%%%%%%%%
%%%%%%%%%%%%%%%%%%%%%%%%%%%%%%%%%%%%%%%%%%%%%%%%%%%%%%%%%%%%%%%%%%%%%%%%%%%%%%%
%%%%%%%%%%%%%%%%%%%%%%%%%%%%%%%%%%%%%%%%%%%%%%%%%%%%%%%%%%%%%%%%%%%%%%%%%%%%%%%

\abstract{%
	Let $\frak g$ be a finite dimensional complex semi-simple
	Lie algebra with Weyl group $W$
	and simple reflections $S$. For $I\subseteq S$ let
	$\frak g_I$ be the corresponding semi-simple subalgebra of $\frak g$.
	Denote by $W_I$ the Weyl group of $\frak g_I$ and let
	$\wcirc$ and $\wIcirc$ be the longest elements of $W$ and $W_I$,
	respectively. In this paper we show that the answer to Kostant's problem,
	i.e. whether the universal enveloping algebra surjects onto the space
	of all ad-finite linear transformations of a given module,
	is the same for the simple highest weight $\frak g_I$-module $L_I(x)$
	of highest weight $x\cdot 0$, $x\in W_I$, as the answer for the
	simple highest weight $\frak g$-module $L(x\wIcirc\wcirc)$
	of highest weight $x\wIcirc\wcirc\cdot 0$.
	We also give a new description of the
	unique quasi-simple quotient of the Verma module $\Delta(e)$ with the
	same annihilator as $L(y)$, $y\in W$.
	}

\section{Introduction}

Let $\frak g=\frak n^-\oplus\frak h\oplus\frak n$ be a
finite dimensional complex semi-simple Lie algebra
with a chosen triangular decomposition, and let $\enva{\frak g}$ be
its universal enveloping algebra. For two $\frak g$-modules $M$ and $N$,
the space $\Hom_\bbC(M, N)$ of linear maps from $M$ to $N$ has
a $\enva{\frak g}$-bimodule structure in the natural way
(see for example~\cite[Kapitel~6]{jantzenEinhullende}), and hence
a $\frak g$-module structure via the adjoint action. The $\frak g$-submodule
of $\Hom_\bbC(M, N)$ consisting of all locally finite elements
is in fact a $\enva{\frak g}$-sub-bimodule, which we denote by
$\calL(M, N)$. As $\enva{\frak g}$ itself is locally finite
under the adjoint action, we have a natural homomorphism of
$\enva{\frak g}$ into $\calL(M, M)$ for every $\frak g$-module $M$, whose
kernel is the annihilator $\Ann M$ of $M$ in $\enva{\frak g}$.
The question raised by Kostant (see for
example~\cite[6.10]{conze}, \cite{josephKostant})
is: for which $\frak g$-modules $M$ is the natural inclusion
\[
	\enva{\frak g}/\Ann M\hookrightarrow\calL(M, M)
\]
a \emph{surjection}.

This is in general a difficult question, and the
answer is not even known for simple highest weight modules.
It is known to have the positive answer for Verma modules (\cite[6.9]{conze} for
simple Verma modules, generalized in \cite[6.4]{josephKostant} for
the general case) and for all quotients
of dominant Verma modules~\cite[6.9]{jantzenEinhullende}.
For semi-simple Lie algebras having roots of different length,
examples of simple highest weight modules
where the answer is negative were found early (see for example~\cite[6.5]{conzeduflo},
\cite[9.5]{josephKostant}). More recently, many examples have also been
found in type $A$ (see~\cite{mazorchukstroppelWedderburn}
and~\cite{kahrstrommazorchuk}).
The answer to Kostant's problem
is a valuable tool for example when determining Goldie rank ratios
(see~\cite{josephGoldieRankOne, josephKostantGoldieRank, josephSumRule}),
and in the study of generalized Verma modules
(see~\cite{milicicsoergel, khomenkomazorchukInduced,
mazorchukstroppelInducedCellModules}).

In this note we investigate how the answer to this question for certain
simple highest weight $\frak g$-modules relates
to the answer for modules of semi-simple subalgebras of $\frak g$.
More precisely,
let $W$ be the Weyl group of $\frak g$, with simple reflections $S$,
determined by the triangular decoposition.
For a subset $I\subseteq S$, let $W_I$ denote the parabolic subgroup
of $W$ generated by $I$, denote by $\frak g_I$ the corresponding
semi-simple subalgebra of $\frak g$, and let $\wcirc$ and $\wIcirc$ denote the longest
elements of $W$ and $W_I$.
For $x\in W$, let $L(x)$ denote the simple
highest weight $\frak g$-modules with highest weight $x\cdot 0$ (see next
section for precise definition),
and similarly, for $x\in W_I$, let $L_I(x)$ denote the simple highest
weight $\frak g_I$-module with highest weight $x\cdot 0$. The main
result of this paper is the following theorem,
which generalizes previous results by
Conze-Berline and Duflo~\cite[2.12 and~6.3]{conzeduflo},
later generalized by Gabber and Joseph~\cite[4.4]{gabberjoseph} (the case when $x=e$),
and Mazorchuk~\cite[Theorem~1]{mazorchukTwistedApproach} (the case when $x$ is a
simple reflection).

\begin{theorem}\label{thm:main}
	Let $x\in W_I$. Then Kostant's problem has the positive answer for
	$L_I(x)$ if and only if Kostant's problem has the positive answer for
	$L(x\wIcirc \wcirc)$.
\end{theorem}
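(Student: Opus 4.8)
The plan is to move Kostant's problem, on both sides, into category $\catO$ by means of Harish-Chandra bimodules, and then to compare the two resulting category-$\catO$ statements through a single functor assembled from parabolic induction and a twisting functor. First I would reduce to the principal blocks. Since $x\in W_I$, the weights $x\cdot 0$ and $x\wIcirc\wcirc\cdot 0$ are regular and integral, so $L_I(x)$ and $L(x\wIcirc\wcirc)$ lie in $\catO_0(\frak g_I)$ and $\catO_0(\frak g)$. For a simple module $L$ in the principal block, $\enva{\frak g}/\Ann L\hookrightarrow\calL(L,L)$ is a morphism in the category ${}_0\calH_0$ of Harish-Chandra bimodules with generalised trivial central character on both sides, and applying the functor $\mathbb V=(-)\otimes_{\enva{\frak g}}\Delta(e)$ (which on the bimodules in question is exact and faithful, by the Bernstein--Gelfand equivalence onto a subcategory of $\catO_0$), surjectivity of that inclusion is equivalent to surjectivity of the natural map $K(y)\to\widehat K(y)$ in $\catO_0(\frak g)$, where $K(y):=\mathbb V(\enva{\frak g}/\Ann L(y))=\Delta(e)/\Ann(L(y))\Delta(e)$ is the quasi-simple quotient of $\Delta(e)$ of the abstract and $\widehat K(y):=\mathbb V(\calL(L(y),L(y)))$. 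Thus Kostant's problem for $L(y)$ has the positive answer iff $K(y)\to\widehat K(y)$ is onto, and likewise for $\frak g_I$, with the analogous functor $\mathbb V_I$ and modules $K_I(x)\to\widehat K_I(x)$.

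Next I would build the comparison functor. Write $\frak p=\frak p_I=\frak l_I\oplus\frak u_I$ with $\frak l_I=\frak g_I\oplus\frak z$, and let $F\colon\catO_0(\frak g_I)\to\catO_0(\frak g)$ be parabolic induction $\Ind_{\frak p}^{\frak g}$ (after fixing the action of $\frak z$) followed by the twisting functor attached to $\wIcirc$. Since $\Ind$ is exact and twisting functors are right exact, $F$ is right exact; and I would check that $F$ is faithful on the subcategory in which the modules below live, noting that for a right exact faithful functor $F(\phi)$ onto forces $\phi$ onto, because $F(\operatorname{coker}\phi)=\operatorname{coker}F(\phi)$. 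The three facts to establish are: (i) $F(L_I(x))\cong L(x\wIcirc\wcirc)$ for all $x\in W_I$ --- the case $x=\wIcirc$ being that the antidominant Verma of $\frak g_I$ is carried to the antidominant Verma of $\frak g$, and the general case reducing to the known action of twisting functors on simple modules together with highest-weight and support bookkeeping; (ii) $F$ carries the quasi-simple quotient $K_I(x)$ of $\Delta_{\frak g_I}(e)$ to $K(x\wIcirc\wcirc)$, equivalently $\Ann(L(x\wIcirc\wcirc))$ and the submodule $\Ann(L(x\wIcirc\wcirc))\Delta(e)$ are controlled under induction by the corresponding data for $\frak g_I$ --- this is where the new description of $K(y)$ announced in the abstract is used; and (iii) $F$ carries $\widehat K_I(x)=\mathbb V_I\calL_{\frak g_I}(L_I(x),L_I(x))$ to $\widehat K(x\wIcirc\wcirc)$, compatibly with the natural maps out of $\enva{\cdot}$. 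Granting (i)--(iii), applying $F$ to $K_I(x)\to\widehat K_I(x)$ and using right exactness and faithfulness shows that this map is onto if and only if $K(x\wIcirc\wcirc)\to\widehat K(x\wIcirc\wcirc)$ is, which by the first step is exactly the asserted equivalence of answers; and it specialises correctly, with $I=S$ a tautology, $x=e$ recovering Gabber--Joseph, and $x$ a simple reflection recovering Mazorchuk's theorem.

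The crux, and the step I expect to resist the most, is (iii): identifying $\calL_{\frak g}(L(x\wIcirc\wcirc),L(x\wIcirc\wcirc))$, after $\mathbb V$, as the $F$-image of $\calL_{\frak g_I}(L_I(x),L_I(x))$ after $\mathbb V_I$. This asks for the compatibility of the locally finite $\Hom$-bimodule with parabolic induction and with twisting functors --- a Shapiro/adjunction computation that is delicate because $\calL(-,-)$ is only left exact, because induction changes the Lie algebra, and because one must match the images of the canonical homomorphisms out of $\enva{\cdot}$ on the two sides rather than merely the underlying bimodules. Passing to derived categories, where the derived twisting functor for $\wIcirc$ is an autoequivalence, may streamline the faithfulness and compatibility bookkeeping at the cost of tracking cones. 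Step (ii) is the ring-theoretic companion: one needs $\Ann(\Ind_{\frak p}^{\frak g}N)$, and the behaviour of $\Ann(L)\Delta(e)$, expressed in terms of the analogous objects over $\frak g_I$ --- precisely the "new description" of the quasi-simple quotient that the abstract promises to supply.
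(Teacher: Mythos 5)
Your overall architecture (reduce Kostant's problem to a surjectivity question involving the quasi-simple quotient $D=\Delta(e)/\Ann(L)\Delta(e)$, then transport across parabolic induction) matches the paper's, but the proposal has a genuine gap exactly at the point you flag as the crux, namely step (iii), and the mechanism you propose for closing it does not work as stated. The paper never identifies $\calL\bigl(L(x\wIcirc\wcirc),L(x\wIcirc\wcirc)\bigr)$ as the image of $\calL_I\bigl(L_I(x),L_I(x)\bigr)$ under any functor. Instead it only compares \emph{multiplicity spaces}: using $\Hom_{\frak g}\bigl(V,\calL(M,N)\bigr)\cong\Hom_{\frak g}(V\otimes M,N)$ together with the adjunction between $\Ind_\xi$ and $\Res_\xi$ and a tensor-filtration lemma, it proves $\Hom_{\frak g}(V\otimes\Ind_\xi M,\Ind_\xi N)\cong\Hom_{\frak g_I}(\Res_0V\otimes M,N)$ for all finite dimensional $V$. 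This shows that the $V$-isotypic dimensions of $\calL(\Ind_\xi M,\Ind_\xi M)$ and $\calL(\Ind_\xi N,\Ind_\xi N)$ agree iff the corresponding dimensions agree over $\frak g_I$; combined with the \emph{a priori} injections $\calL(D,D)\hookrightarrow\calL(L,L)$ on each side, equality of dimensions forces bijectivity. No faithful comparison functor on the cokernel is ever needed. By contrast, your reduction "$F(\phi)$ onto $\Rightarrow\phi$ onto" requires $F$ to kill no nonzero cokernel, and $F$ contains the twisting functor $T_{\wIcirc}$, which annihilates many simple modules ($T_sL(y)=0$ whenever $sy<y$); faithfulness on "the subcategory in which the modules live" is not something you can expect to check, since the cokernel of $K\to\widehat K$ is not under control.

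There is a second mismatch in your step (ii). With the character $\xi$ for which $\Ind_\xi$ identifies $L_I(x)$ with $L(x\wIcirc\wcirc)$, one has $\Ind_\xi\Delta_I(e)\cong\Delta(\wIcirc\wcirc)$, \emph{not} $\Delta(e)$; so the induced quasi-simple quotient is a quotient of $\Delta(\wIcirc\wcirc)$ and cannot be the quasi-simple quotient $K(x\wIcirc\wcirc)$ of $\Delta(e)$ (they do share the same annihilator, but that is all). Applying $T_{\wIcirc}$ does not repair this, since $T_{\wIcirc}\Delta(\wIcirc\wcirc)\cong\Delta(\wcirc)$. What the paper actually needs at this point --- and what your proposal is missing --- is the nontrivial fact that Kostant's problem has a positive answer for \emph{every quotient of} $\Delta(\wIcirc\wcirc)$, not just of the dominant Verma module. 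That is Proposition~\ref{prop:botquot}, and it is precisely there (an $\Ext^1$-vanishing proved by moving to the derived category, using $\calR C_{\wIcirc}$ and $\calL T_{\wIcirc}$, and replacing $X$ by a complex of tilting modules) that twisting functors genuinely enter the argument --- not as part of a comparison functor between the two categories. The "new description" of $D$ in Section~\ref{sec:D} is, contrary to your expectation, not used in the proof of the theorem at all.
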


The idea of the proof is as follows. For each $x\in W_I$, there is a unique
quotient $D$ of the dominant Verma module $\Delta_I(e)$ satisfying
$\Ann D = \Ann L_I(x)$. Since Kostant' problem has the positive answer for $D$,
as it is a quotient of a dominant Verma module, we see that
Kostant's problem has the positive answer for $L_I(x)$ if and only if
\begin{equation}\label{eq:intone}
	\calL_I(D, D)\cong\calL_I\bigl(L_I(x), L_I(x)\bigr)
\end{equation}
(where the index $I$ is used to emphasize that objects are defined
with respect to $\frak g_I$ as opposed to $\frak g$).
We show that we can `lift' this situation
by parabolic induction, i.e.
there exists a $\frak g$-module $D'$ for which the answer to Kostant's problem
is positive, and such that
\[
	\calL(D', D')\cong\calL\bigl(L(x\wIcirc\wcirc), L(x\wIcirc\wcirc)\bigr)
\]
holds if and only if~\eqref{eq:intone} holds.

In Section~\ref{sec:D} we give an alternative description of the so-called
\emph{quasi-simple} quotients the dominant Verma module, originally described
in~\cite[Section~5]{josephDixmiersProblem}, which are used as an important
tool in the proof of Theorem~\ref{thm:main}. Finally, in Section~\ref{sec:sl6}
we apply Theorem~\ref{thm:main} to get some new answers to Kostant's problem
for the Lie algebra $\frak{sl}_6$.

\medskip

\noindent{\bf Acknowledgements.}
The author thanks V.~Mazorchuck for fruitful comments, suggestions and discussions.

\section{Notation and preliminaries}

The subset $I$ of $S$ determines a parabolic subalgebra
$\frak p_I$ of $\frak g$, containing $\frak g_I$. The triangular
decomposition of $\frak g$ induces a triangular decomposition
$\frak g_I = \frak n_I^-\oplus\frak h_I\oplus\frak n_I$.
Let $\frak u_I$ be the nilradical of $\frak p_I$, and let $\frak z_I$ be the
orthogonal complement of $\frak h_I$ in $\frak h$ with respect
to the Killing form. We thus have the following decompositions,
\[
\frak h = \frak h_I \oplus \frak z_I,\text{ and }
\frak p_I = \frak g_I\oplus \frak z_I\oplus \frak u_I.
\]

The Weyl group $W$ of $\frak g$ acts on $\frak h^*$ in the natural way $w\lambda$,
but in this setting it is more convenient to consider the so-called `dot action',
given by
\[
	w\cdot \lambda \defeq w(\lambda + \rho) - \rho,
\]
where $\rho$ is the half sum of the positive roots. Similarly we
have both the standard action and dot action of $W_I$ on $\frak h_I^*$.
 
Let $\catO$ denote the BGG category (see for example~\cite{bgg, humphreys}),
and let $\catO_0$ denote
the principal block of $\catO$, i.e. the full subcategory of $\catO$
consisting of modules that are
annihilated by some power of the maximal ideal of the
center of $\enva{\frak g}$ which annihilates the trivial module.
The simple modules of $\catO_0$ are the simple highest weight modules $L(w)$
of highest weight $w\cdot 0$, where $w$ runs over $W$.
We denote the Verma module with simple head $L(w)$ by $\Delta(w)$,
and the projective cover of $L(w)$ by $P(w)$. Finally, for $w\in W$
we denote
by $\theta_w$ the indecomposable projective functor on $\catO_0$
(see~\cite{bg}) satisfying
\[
	\theta_w\Delta(e) = P(w).
\]
The corresponding objects for $\frak g_I$ are denoted $\catO^I$, $L_I(w)$,
$\calL_I$, etc.

For a subalgebra $\frak a$ of $\frak g$ (here $\frak a$
will be either $\frak h_I$ or $\frak z_I$), a module $M\in\catO$, and
$\lambda\in\frak a^*$, let
\[
	M_\lambda\defeq \bigl\{\,m\in M\,\big\vert\,xm=\lambda(x)m
		\text{ for all $x$ in $\frak a$}\,\bigr\},
\]
and define the support of $M$ with respect to $\frak a$ as
\[
	\Supp_{\frak a} M\defeq
	\bigl\{\,\lambda\in\frak a^*\,\big\vert\,M_\lambda\neq 0\,\bigr\}.
\]

\section{Parabolic induction}

For $\lambda\in\frak z_I^*$, we define the
\emph{induction functor} from $\catO^I$ to $\catO$ by
\[
	\Ind_\lambda M \defeq \enva{\frak g}\otimes_{\enva{\frak p_I}}M^\lambda,
\]
where $M^\lambda$ is the $\frak p_I$-module obtained from $M$
by letting $\frak z_I$ act by $\lambda$, and $\frak u_I$ act by $0$.
We also define the \emph{restriction functor} from $\catO$ to
$\catO^I$ by
\[
	\Res_\lambda M\defeq M_\lambda,
\]
where the action is restricted to $\frak g_I$.

\begin{lemma}\label{lem:indann}
	If $\Ann_{\enva{\frak g_I}} M=\Ann_{\enva{\frak g_I}} N$
	for two $\frak g_I$-modules $M$ and $N$, then
	$\Ann_{\enva{\frak g}}\Ind_\lambda M=\Ann_{\enva{\frak g}}\Ind_\lambda N$
	for any $\lambda\in\frak z_I^*$.
\end{lemma}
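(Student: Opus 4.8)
The plan is to reduce the equality of annihilators to a statement about the ideal generated by a single element, using the standard fact that $\Ind_\lambda$ is, up to a base change, the functor $\enva{\frak g}\otimes_{\enva{\frak p_I}}(-)$ composed with twisting by $\lambda$ on $\frak z_I$. First I would recall that $\Ann_{\enva{\frak g_I}} M = \Ann_{\enva{\frak g_I}} N$ exactly says that $M$ and $N$ generate the same two-sided ideal behaviour: more precisely, for $u\in\enva{\frak g_I}$ we have $uM=0$ iff $uN=0$. Extending scalars, I want to show that for $v\in\enva{\frak g}$, $v$ annihilates $\Ind_\lambda M$ iff $v$ annihilates $\Ind_\lambda N$, and by symmetry it suffices to prove one implication.

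The key step is a normal-form argument for elements of $\enva{\frak g}\otimes_{\enva{\frak p_I}}M^\lambda$. By the PBW theorem, $\enva{\frak g}$ is free as a right $\enva{\frak p_I}$-module on a basis of monomials in $\frak u_I^-$ (the opposite nilradical), so every element of $\Ind_\lambda M$ is uniquely a finite sum $\sum_j u_j\otimes m_j$ with the $u_j$ running over this PBW basis and $m_j\in M$. Given $v\in\enva{\frak g}$, I would compute $v\cdot(u_j\otimes m_j)$ by moving $v u_j$ back into the normal form: $v u_j = \sum_k u_k\otimes p_{jk}$ with $p_{jk}\in\enva{\frak p_I}$, and then let $\frak u_I$ act by zero and $\frak z_I$ by $\lambda$ to land in $\enva{\frak g_I}$, obtaining coefficients $q_{jk}^\lambda\in\enva{\frak g_I}$ depending only on $v$, $\lambda$, and the PBW combinatorics — crucially \emph{not} on $M$. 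Then $v\cdot\bigl(\sum_j u_j\otimes m_j\bigr)=\sum_k u_k\otimes\bigl(\sum_j q_{jk}^\lambda m_j\bigr)$, and by freeness this vanishes iff $\sum_j q_{jk}^\lambda m_j=0$ in $M$ for every $k$. Now $v$ annihilates all of $\Ind_\lambda M$ iff, for every choice of $m_j\in M$, all these $\enva{\frak g_I}$-combinations vanish; choosing the $m_j$ to range over all of $M$ one at a time, this is equivalent to $q_{jk}^\lambda\in\Ann_{\enva{\frak g_I}}M$ for every $j,k$. Since $\Ann_{\enva{\frak g_I}}M=\Ann_{\enva{\frak g_I}}N$, the same condition characterizes $v\in\Ann_{\enva{\frak g}}\Ind_\lambda N$, giving both inclusions at once.

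I expect the main obstacle to be bookkeeping: making the claim ``$q_{jk}^\lambda$ does not depend on $M$'' precise requires care, since the rewriting $vu_j=\sum_k u_k p_{jk}$ happens in $\enva{\frak g}$ before any tensoring, and one must check that the subsequent passage through $M^\lambda$ — i.e. killing $\frak u_I$ and applying $\lambda$ on $\frak z_I$ — is a well-defined algebra map on the relevant subspace of $\enva{\frak p_I}$ that is applied uniformly. This is exactly the statement that $M^\lambda$ is obtained from $M$ by restriction along the algebra homomorphism $\enva{\frak p_I}\to\enva{\frak g_I}$ sending $\frak z_I\ni z\mapsto\lambda(z)$ and $\frak u_I\mapsto 0$, so the $q_{jk}^\lambda$ are simply the images of the $p_{jk}$ under this fixed homomorphism, independent of $M$. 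Once that is nailed down the rest is the freeness argument above. A slicker phrasing, which I might use instead to keep the proof short: $\Ind_\lambda$ factors as restriction of scalars along $\enva{\frak p_I}\to\enva{\frak g_I}$ followed by $\enva{\frak g}\otimes_{\enva{\frak p_I}}(-)$, and for a free (hence faithfully flat) ring extension $A\subseteq B$ with $B$ free as a right $A$-module, a left $A$-module $P$ and $v\in B$ satisfy $v(B\otimes_A P)=0$ iff all the ``PBW coefficients'' of $v$ lie in $\Ann_A P$, a condition depending on $P$ only through $\Ann_A P$.
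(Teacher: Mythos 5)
Your argument is correct, and it is essentially a self-contained unfolding of what the paper does by citation. The paper proceeds in two steps: it writes down $\Ann_{\enva{\frak p_I}}M^\lambda$ explicitly as
$\bigl(\Ann_{\enva{\frak g_I}}M\bigr)\otimes\enva{\frak z_I}\otimes\enva{\frak u_I}+\enva{\frak g_I}\otimes\ker\lambda\otimes\enva{\frak u_I}+\enva{\frak g_I}\otimes\enva{\frak z_I}\otimes\enva{\frak u_I}_{>0}$,
concludes $\Ann_{\enva{\frak p_I}}M^\lambda=\Ann_{\enva{\frak p_I}}N^\lambda$, and then invokes Dixmier's Proposition~5.1.7(ii), which says that the annihilator of an induced module is the largest two-sided ideal contained in $\enva{\frak g}\Ann_{\enva{\frak p_I}}M^\lambda$ and hence depends only on the $\enva{\frak p_I}$-annihilator. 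You instead observe that $M^\lambda$ is restriction of $M$ along the fixed algebra homomorphism $\enva{\frak p_I}\to\enva{\frak g_I}$ killing $\frak u_I$ and evaluating $\frak z_I$ at $\lambda$ (which is legitimate, since $\frak u_I$ is an ideal of $\frak p_I$ and $\frak z_I$ is central in the Levi factor), and then prove the Dixmier-type statement directly: by PBW, $\enva{\frak g}$ is free as a right $\enva{\frak p_I}$-module on monomials in $\frak u_I^-$, so writing $vu_j=\sum_k u_kp_{jk}$ and pushing the $p_{jk}$ through the fixed homomorphism shows that $v\in\Ann_{\enva{\frak g}}\Ind_\lambda M$ is equivalent to a family of conditions ``$q^\lambda_{jk}\in\Ann_{\enva{\frak g_I}}M$'' whose data depend only on $v$ and $\lambda$. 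That freeness computation is exactly the engine behind the cited Dixmier result, so the two proofs rest on the same mechanism; yours buys self-containedness at the cost of the normal-form bookkeeping, while the paper's is shorter but outsources the key step. One small imprecision in your closing ``slicker phrasing'': the relevant data are not the PBW coefficients of $v$ itself but the coefficients of $vu_j$ for \emph{every} basis monomial $u_j$, as your detailed argument correctly uses.
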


\begin{proof}
	We have
	\begin{multline*}
		\Ann_{\enva{\frak p_I}} M^\lambda =
		\bigl(\Ann_{\enva{\frak g_I}} M\bigr)\otimes\enva{\frak z_I}\otimes\enva{\frak u_I}
		+ \enva{\frak g_I}\otimes\ker\lambda\otimes\enva{\frak u_I} \\
		+ \enva{\frak g_I}\otimes\enva{\frak z_I}\otimes\enva{\frak u_I}_{>0},
	\end{multline*}
	where $\enva{\frak u_I}_{>0}$ denotes the elements of $\enva{\frak u_I}$
	of degree at least $1$. Hence
	$\Ann_{\enva{\frak p_I}} M^\lambda = \Ann_{\enva{\frak p_I}} N^\lambda$,
	so the result follows
	from~\cite[Proposition~5.1.7(ii)]{dixmier}.
\end{proof}

Let $\roots_I$ be the simple roots corresponding to $I$.
The fundamental weights of $\frak h_I^*$ dual to $\roots_I$ define a basis $B_I$ of
$\frak z_I^*$, which in turn define a partial order on $\frak z_I^*$
by $\nu\leqslant\lambda$ for $\nu, \lambda\in\frak z_I^*$
if $\lambda-\nu$ is in the non-negative span of $B_I$.
For $\lambda\in\frak z_I^*$ and $M\in\catO$, let
$M_{\not\leqslant\lambda}$ be the submodule of $M$ generated
by all $M_\nu$, $\nu\not\leqslant\lambda$, and define
\[
	M^{\leqslant\lambda} \defeq M/M_{\not\leqslant\lambda}.
\]

Generalising the situation when tensoring Verma modules with finite
dimensional modules, we get the following.

\begin{lemma}\label{lem:tensorfiltration}
	For a finite dimensional $\frak g$-module $V$, $M\in\catO^I$,
	and $\lambda\in\frak z_I^*$,
	the module $V\otimes \Ind_\lambda M$ has a filtration
	\[
		0=M_0\subset M_1\subset\cdots\subset M_k=V\otimes\Ind_\lambda M
	\]
	with
	\[
		M_i/M_{i-1}\cong\Ind_{\lambda+\mu_i}\Bigl(\bigl(\Res_{\mu_i} V\bigr)\otimes M\Bigr),
	\]
	where $\mu_1>\mu_2>\cdots>\mu_k\in\frak z_I^*$ and
	$\Supp_{\frak z_I} V = \{\mu_1, \dots, \mu_k\}$.
\end{lemma}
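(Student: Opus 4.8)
The plan is to mimic the classical proof that $V \otimes \Delta(\mu)$ has a Verma filtration (e.g.\ \cite[Theorem~3.6]{humphreys}), replacing the role of the weight space decomposition of $V$ by the $\frak z_I$-weight space decomposition, and replacing Verma modules by the induced modules $\Ind_\lambda$. The starting observation is that induction is essentially an induced-module construction, so the tensor identity applies: for any $\frak g$-module $V$,
\[
  V \otimes \Ind_\lambda M
  = V \otimes \bigl(\enva{\frak g}\otimes_{\enva{\frak p_I}} M^\lambda\bigr)
  \cong \enva{\frak g}\otimes_{\enva{\frak p_I}} \bigl(\Res^{\frak g}_{\frak p_I} V \otimes M^\lambda\bigr),
\]
the isomorphism being the standard one $v\otimes(u\otimes m)\mapsto u\otimes(u^{-1}v\otimes m)$ on generators. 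Thus it suffices to produce, over $\frak p_I$, a filtration of $\Res^{\frak g}_{\frak p_I}V \otimes M^\lambda$ whose subquotients are the $\frak p_I$-modules $\bigl((\Res_{\mu_i} V)\otimes M\bigr)^{\lambda+\mu_i}$, and then apply the exact functor $\enva{\frak g}\otimes_{\enva{\frak p_I}}(-)$.

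The next step is to build that $\frak p_I$-filtration. Decompose $V$ according to its $\frak z_I$-weights: since $\frak z_I$ is central in $\frak p_I$ and acts semisimply on the finite dimensional module $V$, we have $V = \bigoplus_{\mu\in\Supp_{\frak z_I}V} V_\mu$ as $\frak z_I$-modules, with $V_\mu = \Res_\mu V$ carrying a residual $\frak g_I$-action; note $\frak u_I$ does \emph{not} preserve this grading but shifts $\mu$ upward in the order $\leqslant$ coming from $B_I$ (this is where the choice of $B_I$ and the partial order enter — $\frak u_I$ is spanned by root vectors for roots whose projection to $\frak z_I^*$ is a positive combination of the $B_I$). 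Order the supports as $\mu_1 > \mu_2 > \cdots > \mu_k$ and set $N_i = \bigl(\bigoplus_{j\geqslant i} V_{\mu_j}\bigr)\otimes M^\lambda$. One checks that each $N_i$ is a $\frak p_I$-submodule: it is visibly $\frak g_I\oplus\frak z_I$-stable, and $\frak u_I$-stability follows because $\frak u_I$ raises the $\frak z_I$-weight, hence cannot move a summand $V_{\mu_j}$ to one with strictly larger (in $\leqslant$) index among the $\mu$'s already discarded. The subquotient $N_i/N_{i+1}$ is then $V_{\mu_i}\otimes M$ with $\frak z_I$ acting by $\lambda+\mu_i$ and $\frak u_I$ acting by $0$ (the only surviving part of the $\frak u_I$-action lands in $N_{i+1}$), which is exactly $\bigl((\Res_{\mu_i}V)\otimes M\bigr)^{\lambda+\mu_i}$.

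Applying $\enva{\frak g}\otimes_{\enva{\frak p_I}}(-)$, which is exact since $\enva{\frak g}$ is free as a right $\enva{\frak p_I}$-module, then transports this to the asserted filtration of $V\otimes\Ind_\lambda M$ with subquotients $\Ind_{\lambda+\mu_i}\bigl((\Res_{\mu_i}V)\otimes M\bigr)$. The main obstacle I anticipate is the bookkeeping in the second step: verifying carefully that $\frak u_I$ strictly raises $\frak z_I$-weights with respect to the specific order defined by $B_I$, and hence that the $N_i$ really are $\frak p_I$-submodules rather than merely $(\frak g_I\oplus\frak z_I)$-submodules. One also has to confirm that $V\otimes\Ind_\lambda M$ and all subquotients genuinely lie in $\catO$ (so the statement makes sense), which follows since $\Ind_\lambda$ sends $\catO^I$ into $\catO$ and $\catO$ is closed under tensoring with finite dimensional modules. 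Everything else is the standard tensor-identity/exact-functor formalism.
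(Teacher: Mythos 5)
Your overall strategy --- the tensor identity $V\otimes\Ind_\lambda M\cong\enva{\frak g}\otimes_{\enva{\frak p_I}}\bigl(V\vert_{\frak p_I}\otimes M^\lambda\bigr)$, a $\frak p_I$-filtration of the second factor by $\frak z_I$-weights, and exactness of $\enva{\frak g}\otimes_{\enva{\frak p_I}}(-)$ --- is sound, and is essentially a repackaging of the paper's argument (the paper instead works directly with PBW-type bases $B_j\otimes(1\otimes_{\enva{\frak p_I}}B)$ and $\enva{\frak u_I^-}$-freeness, following Jantzen's Satz~2.2, rather than invoking the tensor identity). However, your filtration is oriented the wrong way, and the stability check as written is self-contradictory.

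You correctly assert that $\frak u_I$ \emph{raises} $\frak z_I$-weights: the roots occurring in $\frak u_I$ restrict to strictly positive elements of $\frak z_I^*$ in the order determined by $B_I$ (this is also why $\xi$ is the top of $\Supp_{\frak z_I}\Ind_\xi M$ later in the paper). But then $N_i=\bigl(\bigoplus_{j\geqslant i}V_{\mu_j}\bigr)\otimes M^\lambda$ is \emph{not} $\frak u_I$-stable: since $\mu_1>\mu_2>\cdots>\mu_k$, applying $\frak u_I$ to $V_{\mu_j}$ with $j\geqslant i$ lands in weight spaces $V_\mu$ with $\mu>\mu_j$, i.e.\ exactly in the summands $V_{\mu_1},\dots,V_{\mu_{i-1}}$ that you have discarded; and your parenthetical ``the only surviving part of the $\frak u_I$-action lands in $N_{i+1}$'' would require $\frak u_I$ to \emph{lower} $\frak z_I$-weights, contradicting your own (correct) earlier claim. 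The fix is to reverse the filtration: set $N_i=\bigl(\bigoplus_{j\leqslant i}V_{\mu_j}\bigr)\otimes M^\lambda$, an increasing chain of $\frak p_I$-submodules (now $\frak u_I$ pushes $V_{\mu_i}$ into $N_{i-1}$, and it annihilates $M^\lambda$ by definition), with $N_i/N_{i-1}\cong\bigl((\Res_{\mu_i}V)\otimes M\bigr)^{\lambda+\mu_i}$; inducing then yields precisely the paper's filtration, in which $M_1$ is generated by the \emph{highest} $\frak z_I$-weight component of $V$. (One should also fix a linear refinement of the partial order on $\Supp_{\frak z_I}V$ before listing $\mu_1>\cdots>\mu_k$, as the paper implicitly does.)
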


\begin{proof}
	Let $\mu_1$, $\dots$, $\mu_k\in\frak z_I^*$ be as in the lemma,
	let $B_1$, $\dots$, $B_k$ be bases of $\Res_{\mu_1}V$, $\dots$,
	$\Res_{\mu_k}V$, and
	let $B$ be a basis of $M$. Now define
	\[
		M_i\defeq \sum_{1\leq j\leq i}\enva{\frak g}\Bigl(B_j\otimes
			\bigl(1\otimes_{\enva{\frak p_I}}B\bigr)\Bigr).
	\]
	As in the `standard' case
	(se for instance~\cite[Satz~2.2]{jantzenEinhullende}) we find that
	each $M_i$ is $\enva{\frak u_I^-}$-free over
	\[
		\bigcup_{1\leq j\leq i}B_j\otimes\bigl(1\otimes_{\enva{\frak p_I}} B\bigr).
	\]
	In particular, as $\enva{\frak u_I^-}$-modules we have that
	\[
		M_i/M_{i-1}\cong \enva{\frak u_I^-}\Bigl(B_i\otimes\bigl(1\otimes_{\enva{\frak p_I}} B\bigr)\Bigr).
	\]
	Furthermore, it is straightforward to see that,
	as $\enva{\frak g_I}$-modules,
	\[
		\enva{\frak g_I}\Bigl(B_i\otimes\bigl(1\otimes_{\enva{\frak p_I}}B\bigr)\Bigr)
		\cong \bigl(\Res_{\mu_i}V\bigr)\otimes M,
	\]
	from which the statement follows.
\end{proof}

\begin{corollary}\label{cor:resind}
	For any $\lambda, \mu\in\frak z_I^*$, finite dimensional $\frak g$-module
	$V$, and $M\in\catO^I$, we have
	\[
		\Res_\mu(V\otimes \Ind_\lambda M)^{\leqslant\mu}\cong
		\bigl(\Res_{\mu-\lambda} V\bigr)\otimes M.
	\]
\end{corollary}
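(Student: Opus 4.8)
The plan is to derive Corollary~\ref{cor:resind} directly from the filtration in Lemma~\ref{lem:tensorfiltration} by analysing which subquotient survives after applying the functor $\Res_\mu(-)^{\leqslant\mu}$. First I would observe that the partial order on $\frak z_I^*$ defined via $B_I$ behaves well under the induction functor: for $\nu\in\frak z_I^*$, the module $\Ind_\nu N$ has all its $\frak z_I$-weights of the form $\nu-\sum_{\alpha\in\roots_I}c_\alpha\omega_\alpha$ with $c_\alpha\geqslant 0$ (since $\frak u_I^-$ lowers $\frak z_I$-weight by positive combinations of the restricted roots, which pair non-negatively with $B_I$), so $\Supp_{\frak z_I}\Ind_\nu N\subseteq\{\,\kappa:\kappa\leqslant\nu\,\}$, with $\nu$ itself occurring iff $N\neq 0$. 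In particular $(\Ind_\nu N)^{\leqslant\mu}=0$ unless $\nu\leqslant\mu$, and $(\Ind_\nu N)_\mu=0$ unless $\mu\leqslant\nu$.

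Next I would apply these observations termwise to the filtration $0=M_0\subset M_1\subset\cdots\subset M_k=V\otimes\Ind_\lambda M$, whose subquotients are $M_i/M_{i-1}\cong\Ind_{\lambda+\mu_i}\bigl((\Res_{\mu_i}V)\otimes M\bigr)$. The functor $\Res_\mu(-)^{\leqslant\mu}$ is the composition of an exact functor ($\Res_\mu$, picking out a $\frak z_I$-weight space of the whole module, which is exact since $\frak z_I$ acts semisimply) with a right-exact one; the key point is that it kills every subquotient $M_i/M_{i-1}$ for which $\lambda+\mu_i\neq\mu$. Indeed, if $\lambda+\mu_i\not\leqslant\mu$ then after passing to the quotient by $M_{\not\leqslant\mu}$ this subquotient is annihilated; and if $\lambda+\mu_i\leqslant\mu$ but $\lambda+\mu_i\neq\mu$, i.e. $\lambda+\mu_i<\mu$, then by the support bound $(\Ind_{\lambda+\mu_i}N)_\mu=0$, so the subquotient contributes nothing to the $\mu$-weight space. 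Hence the only surviving contribution comes from the unique index $i_0$ with $\lambda+\mu_{i_0}=\mu$, i.e. $\mu_{i_0}=\mu-\lambda$ (if $\mu-\lambda\notin\Supp_{\frak z_I}V$ both sides are zero and there is nothing to prove). A short diagram chase, using that $M_{i_0-1}$ has no $\frak z_I$-weight $\mu$ and that everything above $M_{i_0}$ is killed, then gives
\[
	\Res_\mu(V\otimes\Ind_\lambda M)^{\leqslant\mu}\cong\Res_\mu(M_{i_0}/M_{i_0-1})^{\leqslant\mu}.
\]

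Finally I would compute the right-hand side. Since $M_{i_0}/M_{i_0-1}\cong\Ind_{\mu}\bigl((\Res_{\mu-\lambda}V)\otimes M\bigr)$ and, as noted, $\mu$ is the top $\frak z_I$-weight of this induced module with $(\Ind_\mu N)_\mu\cong N$ as $\enva{\frak g_I}$-modules (the generating $\frak p_I$-submodule $1\otimes N$), and since $\Ind_\mu N$ is generated by its $\mu$-weight space so that $(\Ind_\mu N)_{\not\leqslant\mu}\cap(\Ind_\mu N)_\mu=0$, we get $\Res_\mu(\Ind_\mu N)^{\leqslant\mu}\cong N$ with $N=(\Res_{\mu-\lambda}V)\otimes M$, which is exactly the claimed isomorphism. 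The main obstacle I anticipate is not any single computation but making the ``diagram chase'' in the middle step fully rigorous: one must check that taking the $\mu$-weight space commutes appropriately with the quotient $(-)^{\leqslant\mu}$ across the filtration, i.e. that $M_{\not\leqslant\mu}$ meets $(M_{i_0})_\mu$ trivially and that the image of $(M_{i_0})_\mu$ in the quotient is all of $\Res_\mu(V\otimes\Ind_\lambda M)^{\leqslant\mu}$ — this requires keeping careful track of which weights appear in which step of the filtration, for which the support bounds on induced modules established at the outset are exactly what is needed.
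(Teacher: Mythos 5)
Your proof is correct and follows essentially the same route as the paper: both apply the filtration of Lemma~\ref{lem:tensorfiltration} and use support bounds on induced modules to see that only the subquotient with $\lambda+\mu_{i_0}=\mu$ survives the functor $\Res_\mu(-)^{\leqslant\mu}$, contributing exactly $(\Res_{\mu-\lambda}V)\otimes M$ via the top $\frak z_I$-weight space of $\Ind_\mu\bigl((\Res_{\mu-\lambda}V)\otimes M\bigr)$. Your version merely spells out the weight bookkeeping that the paper leaves implicit.
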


\begin{proof}
	If $\mu-\lambda\notin\Supp_{\frak z_I} V$ the result is immediate as both modules
	are zero.
	On the other hand, if $\mu-\lambda\in\Supp_{\frak z_I} V$,
	then by Lemma~\ref{lem:tensorfiltration} the module
	$(V\otimes \Ind_\lambda M)^{\leqslant \mu}$ has a submodule $M'$
	isomorphic to
	\[
		\Ind_\mu\Bigl(\bigl(\Res_{\mu-\lambda}V)\otimes M\Bigr),
	\]
	and
	\[
		\Supp_{\frak z_I}\bigl((V\otimes \Ind_\lambda M)^{\leqslant \mu}/M'\bigr)<\mu,
	\]
	from which the statement follows.
\end{proof}

We now fix $\xi\in\frak z_I^*$ to be the restriction of $\wcirc\cdot 0$
to $\frak z_I$, and let $\catO^\xi$ be the full subcategory of $\catO$
of modules satisfying $\Supp_{\frak z_I} M\leqslant\xi$.
By~\cite[Proposition~11]{mazorchukTwistedApproach},
$\Ind_\xi$ and $\Res_\xi$ induce mutually inverse equivalences
between $\catO^\xi_0$ and $\catO^I_0$, identifying $L_I(x)$ with
$L(x\wIcirc\wcirc)$ and $\Delta_I(x)$ with $\Delta(x\wIcirc\wcirc)$.
Let $\pr_0$ and $\pr^I_0$ denote the projection functors from $\catO$
to $\catO_0$ and $\catO^I$ to $\catO^I_0$, respectively.

\begin{lemma}\label{lem:indrescommute}
	For any $M\in\catO^\xi$ we have
	\[
		\Res_\xi\circ\pr_0(M)\cong\pr^I_0\circ\Res_\xi(M).
	\]
\end{lemma}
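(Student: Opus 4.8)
The plan is to show that both functors $\Res_\xi\circ\pr_0$ and $\pr_0^I\circ\Res_\xi$ act on $M\in\catO^\xi$ by projecting onto the summand supported at the ``dot-zero'' central character (for $\frak g$ and $\frak g_I$ respectively), and then to use the equivalence of categories between $\catO_0^\xi$ and $\catO_0^I$ to identify the two. More precisely, both $\pr_0$ and $\pr_0^I$ are projections onto blocks determined by central characters, and a short computation relating the center of $\enva{\frak g}$ to that of $\enva{\frak g_I}$ via the Harish-Chandra homomorphism and the Killing-form decomposition $\frak h=\frak h_I\oplus\frak z_I$ will show that, on a module on which $\frak z_I$ acts with generalized eigenvalues near $\xi$, the action of $\cntr{\frak g}$ factors through $\cntr{\frak g_I}$ up to the fixed twist by $\xi$. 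The key point is that for $M\in\catO^\xi$, the $\frak z_I$-support is bounded above by $\xi$, so the relevant central characters of $\frak g$ correspond (under the equivalence) precisely to those of $\frak g_I$, and the ``$\cdot 0$'' block on the $\frak g$ side matches the ``$\cdot 0$'' block on the $\frak g_I$ side.

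First I would recall that $\pr_0^I$ is, by definition, the projection onto the direct summand of a module on which the center $\cntr{\frak g_I}$ acts through a power of the maximal ideal corresponding to the trivial $\frak g_I$-module, and similarly for $\pr_0$. Next I would observe that $\Res_\xi$ is exact and commutes with direct sum decompositions, so it suffices to check that $\Res_\xi$ sends the block decomposition of $M\in\catO^\xi$ coming from $\cntr{\frak g}$ to the block decomposition of $\Res_\xi M$ coming from $\cntr{\frak g_I}$, matching the principal block to the principal block. For this, I would use that on $\catO^\xi$ — equivalently, via the Mazorchuk equivalence, on $\catO^I$ after applying $\Ind_\xi$ — the images of $\catO^I_0$ and $\catO^\xi_0$ correspond, and the equivalence from~\cite[Proposition~11]{mazorchukTwistedApproach} identifies $L_I(x)$ with $L(x\wIcirc\wcirc)$, hence identifies the central characters appropriately. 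Since $\Res_\xi$ restricted to $\catO^\xi_0$ is (a component of) this equivalence, it carries $\pr_0(M)$ — the part of $M$ lying in $\catO^\xi_0$ — to the part of $\Res_\xi(M)$ lying in $\catO^I_0$, which is exactly $\pr_0^I(\Res_\xi M)$.

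A cleaner way to organize the same argument: decompose $M = \bigoplus_\chi M_\chi$ over central characters $\chi$ of $\frak g$ (only finitely many occur since $M\in\catO$), with $M_0 = \pr_0(M)$. Applying $\Res_\xi$ gives $\Res_\xi M = \bigoplus_\chi \Res_\xi(M_\chi)$, and I must identify $\Res_\xi(M_\chi)$ with a sum of central-character components for $\frak g_I$, showing that only $\chi = $ (principal character of $\frak g$) contributes to the principal block of $\frak g_I$. This amounts to the statement that on the category $\catO^\xi$ the assignment $\chi \mapsto \Res_\xi(M_\chi)$ respects the identification of $\cntr{\frak g}$-central characters occurring in $\catO^\xi$ with $\cntr{\frak g_I}$-central characters, which in turn follows from the fact that $\Ind_\xi$ and $\Res_\xi$ are inverse equivalences $\catO_0^\xi \leftrightarrow \catO_0^I$ and, more generally, restrict to equivalences between corresponding blocks. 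The main obstacle I anticipate is making the central-character bookkeeping precise: one must verify that restricting the $\frak z_I$-support to be $\leqslant\xi$ cuts down the $\cntr{\frak g}$-central characters occurring to exactly those matching $\cntr{\frak g_I}$-characters in $\catO^I$, with the principal block going to the principal block — and that this matching is compatible with taking $\Res_\xi$ rather than only with $\Ind_\xi$. Once that compatibility is in hand the lemma is a formal consequence of exactness of $\Res_\xi$ and uniqueness of block decompositions.
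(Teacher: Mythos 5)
Your overall strategy --- decompose $M$ into its block components for $\cntr{\frak g}$, use exactness and additivity of $\Res_\xi$, and match the principal block of $\catO$ with the principal block of $\catO^I$ --- is the same as the paper's. But the one step that carries all the content is left unproved, and the justification you sketch for it is essentially circular. Writing $M=M_0\oplus M_1$ with $M_0=\pr_0(M)$, the equivalence $\catO_0^\xi\simeq\catO_0^I$ immediately gives $\pr_0^I\circ\Res_\xi(M_0)\cong\Res_\xi(M_0)\cong\Res_\xi\circ\pr_0(M)$; the entire point is to show that the complementary summand satisfies $\pr_0^I\circ\Res_\xi(M_1)=0$, i.e.\ that no non-principal $\frak g$-block contributes to the principal $\frak g_I$-block after restriction. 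You assert that this ``follows from the fact that $\Ind_\xi$ and $\Res_\xi$ are inverse equivalences $\catO_0^\xi\leftrightarrow\catO_0^I$ and, more generally, restrict to equivalences between corresponding blocks,'' but the cited result concerns only the principal blocks, and the ``more general'' block-by-block claim is precisely what needs to be established; note that $\Res_\xi$ is far from an equivalence on all of $\catO^\xi$ (it annihilates everything with $\frak z_I$-support strictly below $\xi$), so nothing about the non-principal part comes for free from the principal-block equivalence.

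The paper closes this gap by an elementary check on composition factors: if $L(\lambda)$ is a composition factor of $M_1$, then either $\lambda\vert_{\frak z_I}<\xi$, in which case $\Res_\xi L(\lambda)=0$; or $\lambda\vert_{\frak z_I}=\xi$, in which case $\Res_\xi L(\lambda)\cong L_I(\lambda\vert_{\frak h_I})$ and, since $L(\lambda)$ does not lie in $\catO_0$, necessarily $\lambda\notin W_I\wcirc\cdot 0$, equivalently $\lambda\vert_{\frak h_I}\notin W_I\cdot 0$, so that $\pr_0^I L_I(\lambda\vert_{\frak h_I})=0$. Exactness of both functors then forces $\pr_0^I\circ\Res_\xi(M_1)=0$. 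Your alternative suggestion of a Harish-Chandra--homomorphism computation could in principle deliver the same conclusion, but you would still have to carry it out; as written, the proposal correctly isolates the obstacle without overcoming it.
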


\begin{proof}
	Let $\lambda\in\frak h^*$ with $\lambda\vert_{\frak z_I}\leq\xi$.
	If $\lambda\vert_{\frak z_I}<\xi$ then
	\[
		\Res_\xi\circ\pr_0\bigl(L(\lambda)\bigr) =
		\pr^I_0\circ\Res_\xi\bigl(L(\lambda)\bigr) = 0,
	\]
	so assme $\lambda\vert_{\frak z_I}=\xi$.
	We then have that
	\[
		\Res_\xi L(\lambda)\cong L_I\bigl(\lambda\vert_{\frak h_I}\bigr).
	\]
	Furthermore, since $\lambda\vert_{\frak z_I} = (\wcirc\cdot 0)\vert_{\frak z_I}$,
	we have that
	\[
		\pr_0 L(\lambda)\cong
		\begin{cases}
			L(\lambda) & \text{ if $\lambda\in W_I\wcirc\cdot 0$, or equivalently,
					$\lambda\vert_{\frak h_I}\in W_I\cdot 0$,} \\
			0 & \text{otherwise}.
		\end{cases}
	\]
	Hence the statement follows for simple modules since
	\[
		\pr^I_0 L_I\bigl(\lambda\vert_{\frak h_I}\bigr) =
		\begin{cases}
			L_I\bigl(\lambda\vert_{\frak h_I}\bigr)
				& \text{ if $\lambda\vert_{\frak h_I}\in W_I\cdot 0$,} \\
			0 & \text{ otherwise.}
		\end{cases}
	\]
	Now let $M\in\catO^\xi$, and let $M_0\in\catO^\xi_0$ and $M_1\in\catO^\xi$
	be such that
	\[
		M\cong M_0\oplus M_1.
	\]
	By definition, we have
	\begin{equation}\label{eq:projeq1}
		\Res_\xi\circ\pr_0 M \cong \Res_\xi M_0.
	\end{equation}
	Let $L(\lambda)$ be a composition factor of $M_1$.
	If $\lambda\vert_{\frak z_I} < \xi$ then $\Res_\xi L(\lambda)=0$,
	and if $\lambda\vert_{\frak z_I} = \xi$ we must have
	$\lambda\vert_{\frak z_I}\notin W_I\cdot 0$, so
	$\pr^I_0\circ\Res_\xi L(\lambda)=0$. Since both restriction
	and projection are exact it follows that
	\[
		\pr^I_0\circ\Res_\xi M_1=0.
	\]
	On the other hand, since $M_0\in\catO^\xi_0$ we have
	$\Res_\xi M_0\in \catO^I_0$, so
	\[
		\pr^I_0\circ\Res_\xi M_0 \cong \Res_\xi M_0.
	\]
	Since both restriction and projection are additive, it follows that
	\[
		\pr^I_0\circ\Res_\xi M \cong \Res_\xi M_0.
	\]
	Comparing with~\eqref{eq:projeq1} yields the result.
\end{proof}

\section{Proof of Theorem~\ref{thm:main}}

We start by proving the building blocks used in the proof of Theorem~\ref{thm:main}.

\begin{proposition}\label{prop:dimequal}
	For each finite dimensional $\frak g$-module $V$ and $M, N\in\catO_0^I$, we have
	\[
		\Hom_{\frak g}\bigl(V\otimes\Ind_\xi M, \Ind_\xi N\bigr)
		\cong \Hom_{\frak g_I}\bigl(\Res_0V\otimes M, N\bigr).
	\]
\end{proposition}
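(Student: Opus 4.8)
The plan is to use the adjunction between parabolic induction $\Ind_\xi$ and restriction $\Res_\xi$ together with the tensor-filtration results already established. First I would recall that $\Ind_\lambda$ is left adjoint to the appropriate restriction functor (this is standard tensor-hom adjunction for $\enva{\frak g}\otimes_{\enva{\frak p_I}}-$), so that for any $\frak g$-module $X$ and $\frak p_I$-module $Y$ we have $\Hom_{\frak g}(\Ind_\lambda Y, X)\cong\Hom_{\frak p_I}(Y, X)$, the right-hand side being maps of $\frak p_I$-modules. Applying this is not quite what we want directly, though, because the left argument in the statement is $V\otimes\Ind_\xi M$, not an induced module. So the first real step is to move the tensor factor $V$ across: using that $V$ is finite dimensional and self-dual-ish in the relevant sense, $\Hom_{\frak g}(V\otimes\Ind_\xi M, \Ind_\xi N)\cong\Hom_{\frak g}(\Ind_\xi M, V^*\otimes\Ind_\xi N)$.

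Next I would apply Lemma~\ref{lem:tensorfiltration} to $V^*\otimes\Ind_\xi N$, which gives a filtration with subquotients $\Ind_{\xi+\mu_i}\bigl((\Res_{\mu_i}V^*)\otimes N\bigr)$ for $\mu_i\in\Supp_{\frak z_I}V^*$. Since $M\in\catO_0^I$, we have $\Ind_\xi M$ supported at $\xi$ with respect to $\frak z_I$, while $\Ind_{\xi+\mu_i}(\cdots)$ is supported at $\xi+\mu_i$; the central character (or $\frak z_I$-support) argument then shows $\Hom_{\frak g}(\Ind_\xi M, \Ind_{\xi+\mu_i}(\cdots))=0$ unless $\mu_i=0$. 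Hence only the subquotient with $\mu_i=0$ contributes, and a short exact sequence / long exact $\Hom$ argument (using vanishing of $\Hom$ into the other subquotients, handled by induction on the length of the filtration together with the support separation) collapses the computation to
\[
	\Hom_{\frak g}\bigl(V\otimes\Ind_\xi M, \Ind_\xi N\bigr)
	\cong\Hom_{\frak g}\bigl(\Ind_\xi M, \Ind_\xi\bigl((\Res_0 V^*)\otimes N\bigr)\bigr).
\]
Here I should be slightly careful: I want to reintroduce a $V$ rather than a $V^*$, so I would instead filter $V\otimes\Ind_\xi N$ wait — better to keep $V^*$ and note $\Res_0V^*\cong(\Res_0V)^*$, then dualise back at the end; alternatively apply the whole argument symmetrically. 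Either way the outcome is a $\Hom$ between two induced modules with the same $\frak z_I$-weight $\xi$.

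Finally I would finish with adjunction: $\Hom_{\frak g}(\Ind_\xi M, \Ind_\xi N')\cong\Hom_{\frak p_I}(M^\xi, \Res\,\Ind_\xi N')$, and then use that $\Res_\xi\Ind_\xi N'\cong N'$ on $\catO_0^I$ (the equivalence from~\cite[Proposition~11]{mazorchukTwistedApproach}, or a direct PBW computation showing the $\xi$-weight space of $\Ind_\xi N'$ recovers $N'$), reducing to $\Hom_{\frak g_I}(M, N')$ with $N'=(\Res_0 V^*)\otimes N$. Reconciling $V^*$ with $V$ via the adjunction $\Hom_{\frak g_I}(M,(\Res_0 V)^*\otimes N)\cong\Hom_{\frak g_I}((\Res_0 V)\otimes M, N)$ (again finite-dimensionality of $\Res_0 V$) yields the claimed isomorphism. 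I expect the main obstacle to be the middle step: rigorously justifying that only the $\mu_i=0$ layer of the tensor filtration contributes to $\Hom$ out of $\Ind_\xi M$, i.e. controlling possible extensions between the filtration layers and confirming the relevant $\Hom$- and $\Ext^1$-type vanishing from the $\frak z_I$-support separation — and making sure the filtration of Lemma~\ref{lem:tensorfiltration} interacts correctly with the block decomposition so that the surviving contribution lands in $\catO_0^I$ after restriction. Compatibility of these identifications with Lemma~\ref{lem:indrescommute} and Corollary~\ref{cor:resind} should make this bookkeeping go through.
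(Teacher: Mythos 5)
Your overall strategy --- moving $V$ across as $V^*$, filtering $V^*\otimes\Ind_\xi N$ by Lemma~\ref{lem:tensorfiltration}, and trying to extract the $\mu_i=0$ layer --- differs from the paper's, and the step you yourself flag as the main obstacle is a genuine gap: the asserted vanishing $\Hom_{\frak g}\bigl(\Ind_\xi M,\Ind_{\xi+\mu_i}(\cdots)\bigr)=0$ for $\mu_i\neq 0$ is in general \emph{false} when $\mu_i>0$. The $\frak z_I$-support of $\Ind_{\xi+\mu_i}(Y)$ is everything $\leqslant\xi+\mu_i$, which contains $\xi$, so support does not separate source and target; and block considerations do not help either, since $(\Res_{\mu_i}V^*)\otimes N$ spreads over many blocks of $\catO^I$ and the induced layers contain Verma subquotients $\Delta(y)$ with $y\cdot 0\vert_{\frak z_I}=\xi+\mu_i>\xi$ that lie in $\catO_0$ and admit nonzero maps from $\Ind_\xi M$ (already for $M=\Delta_I(x)$, $\Ind_\xi M=\Delta(x\wIcirc\wcirc)$ maps nontrivially to any $\Delta(y)$ with $y\leq x\wIcirc\wcirc$, and most such $y$ have $y\cdot 0\vert_{\frak z_I}>\xi$). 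So the filtration does not collapse to the $\mu_i=0$ layer by any Hom/Ext vanishing of the kind you invoke, and since that layer is a genuine subquotient (neither sub nor quotient), there is no clean long-exact-sequence bookkeeping to fall back on. Your first and last steps (tensor--hom adjunction for $V$, Frobenius reciprocity for $\Ind_\xi$, and $\Res_\xi\Ind_\xi\cong\Id$) are fine.

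The paper avoids this entirely by keeping $V$ on the \emph{source} side: since the target $\Ind_\xi N$ lies in $\catO^\xi_0$, every map out of $V\otimes\Ind_\xi M$ automatically factors through the truncation $\pr_0\bigl((V\otimes\Ind_\xi M)^{\leqslant\xi}\bigr)$; truncating the source is harmless for $\Hom$ into $\catO^\xi_0$, whereas selecting a middle layer of a filtration of the target is not. One then applies the adjunction between $\Res_\xi$ and $\Ind_\xi$ on $\catO^\xi_0\simeq\catO^I_0$, commutes restriction past $\pr_0$ via Lemma~\ref{lem:indrescommute}, and identifies $\Res_\xi(V\otimes\Ind_\xi M)^{\leqslant\xi}\cong\Res_0V\otimes M$ by Corollary~\ref{cor:resind}. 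If you want to salvage your route, the honest fix is essentially to re-import this idea: replace the layer-extraction by the observation that $\Hom_{\frak g}(\Ind_\xi M,Z)\cong\Hom_{\frak g_I}\bigl(M,(Z^{\frak u_I})_\xi\bigr)$ and then compute the $\frak u_I$-invariants of weight $\xi$ in $V^*\otimes\Ind_\xi N$ --- which is exactly the kind of delicate computation the truncation-on-the-source argument sidesteps.
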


\begin{proof}
	We have that
	\begin{align*}
		\Hom_{\frak g}\bigl(V\otimes \Ind_\xi M, \Ind_\xi N\bigr)
		&\cong \Hom_{\frak g}\bigl(\pr_0(V\otimes\Ind_\xi M)^{\leqslant\xi},
				\Ind_\xi N\bigr) \\
		&\cong \Hom_{\frak g_I}\bigl(\Res_{\xi}\circ\pr_0(V\otimes\Ind_\xi M)^{\leqslant\xi},
				N\bigr) \\
		&\cong \Hom_{\frak g_I}\bigl(\pr^I_0\circ\Res_{\xi}(V\otimes\Ind_\xi M)^{\leqslant\xi},
				N\bigr) \\
		&\cong \Hom_{\frak g_I}\bigl(\Res_{\xi}(V\otimes\Ind_\xi M)^{\leqslant\xi},
				N\bigr) \\
		&\cong \Hom_{\frak g_I}(\Res_0 V\otimes M, N),
	\end{align*}
	where the first isomorphism follows from the fact that
	$\Ind_\xi N\in\catO_0^\xi$, the second by the adjointness of
	$\Res_\xi$ and $\Ind_\xi$, the third by Lemma~\ref{lem:indrescommute},
	the fourth by the fact that $N\in\catO^I_0$, and the fifth by
	Corollary~\ref{cor:resind}.
\end{proof}

\begin{corollary}\label{cor:dimequal}
	For $M, N\in\catO_0^I$ we have
	\[
		\Hom_{\frak g_I}\bigl(V, \calL_I(M, M)\bigr) \cong
		\Hom_{\frak g_I}\bigl(V, \calL_I(N, N)\bigr)
	\]
	for all finite dimensional $\frak g_I$-modules V if and only if
	\[
		\Hom_{\frak g}\bigl(V', \calL\bigl(\Ind_\xi M, \Ind_\xi M\bigr)\bigr) \cong
			\Hom_{\frak g}\bigl(V', \calL\bigl(\Ind_\xi N, \Ind_\xi N\bigr)\bigr)
	\]
	for all finite dimensional $\frak g$-modules $V'$.
\end{corollary}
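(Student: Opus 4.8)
The plan is to connect the two families of $\Hom$-spaces appearing in the corollary by means of Proposition~\ref{prop:dimequal}, using the fact that $\calL(M', M')$ is built out of the spaces $\Hom_{\frak g}(V'\otimes M', M')$ as $V'$ ranges over finite dimensional $\frak g$-modules. Concretely, for any $\frak g$-module $M'$ in $\catO_0$ and any finite dimensional $\frak g$-module $V'$, one has the standard adjunction $\Hom_{\frak g}(V', \calL(M', M')) \cong \Hom_{\frak g}(V'\otimes M', M')$, since $\calL(M', M')$ is the maximal locally finite submodule of $\Hom_{\bbC}(M', M')$ and $V'$ is finite dimensional, so every $\frak g$-map $V' \to \Hom_{\bbC}(M', M')$ automatically lands in $\calL(M', M')$ and corresponds to a $\frak g$-map $V'\otimes M' \to M'$ by the tensor-hom adjunction. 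The analogous statement holds over $\frak g_I$.

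With this reformulation, the right-hand condition becomes: $\Hom_{\frak g}(V'\otimes\Ind_\xi M, \Ind_\xi M) \cong \Hom_{\frak g}(V'\otimes\Ind_\xi N, \Ind_\xi N)$ for all finite dimensional $\frak g$-modules $V'$. By Proposition~\ref{prop:dimequal}, the left-hand side is isomorphic to $\Hom_{\frak g_I}(\Res_0 V'\otimes M, M)$ and the right-hand side to $\Hom_{\frak g_I}(\Res_0 V'\otimes N, N)$. So the right-hand condition of the corollary is equivalent to $\Hom_{\frak g_I}(\Res_0 V'\otimes M, M) \cong \Hom_{\frak g_I}(\Res_0 V'\otimes N, N)$ for all finite dimensional $\frak g$-modules $V'$, which in turn (via the $\frak g_I$-adjunction above) says $\Hom_{\frak g_I}(\Res_0 V', \calL_I(M,M)) \cong \Hom_{\frak g_I}(\Res_0 V', \calL_I(N,N))$ for all such $V'$. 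The left-hand condition of the corollary is the same statement but with $\Res_0 V'$ replaced by an arbitrary finite dimensional $\frak g_I$-module $V$.

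The one genuine point to address — and the step I expect to be the main obstacle — is that $\Res_0$, i.e.\ restriction of a finite dimensional $\frak g$-module to $\frak g_I$ followed by taking the $\frak z_I$-weight-zero part, does not hit every finite dimensional $\frak g_I$-module on the nose; a priori the left-hand condition (quantified over all finite dimensional $\frak g_I$-modules $V$) could be strictly stronger than the condition quantified only over modules of the form $\Res_0 V'$. To close this gap I would argue that the collection $\{\Res_0 V' : V' \text{ finite dimensional } \frak g\text{-module}\}$ generates the category of finite dimensional $\frak g_I$-modules under direct sums and direct summands — indeed, taking $V'$ to run over finite dimensional $\frak g$-modules, the $\frak z_I$-weight-zero parts $\Res_0 V'$ include all simple finite dimensional $\frak g_I$-modules (for instance $\Res_0$ applied to a suitable finite dimensional $\frak g$-module whose extreme $\frak z_I$-weight space realises a given dominant integral weight of $\frak h_I$ that is trivial on $\frak z_I$, together with Weyl's complete reducibility), and hence every finite dimensional $\frak g_I$-module is a summand of some $\Res_0 V'$. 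Since both $\Hom_{\frak g_I}(-, \calL_I(M,M))$ and $\Hom_{\frak g_I}(-, \calL_I(N,N))$ are additive in the first variable, an isomorphism for all $\Res_0 V'$ forces an isomorphism for all finite dimensional $\frak g_I$-modules, and conversely. This yields the equivalence of the two conditions and completes the proof.
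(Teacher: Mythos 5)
Your reduction of both conditions to Proposition~\ref{prop:dimequal} via the adjunction $\Hom_{\frak g}(V',\calL(M',M'))\cong\Hom_{\frak g}(V'\otimes M',M')$ is exactly the paper's route, and the direction from the $\frak g_I$-statement to the $\frak g$-statement is fine. The gap is in your final step: the claim that every finite dimensional simple $\frak g_I$-module occurs as a direct summand of some $\Res_0 V'$ is false. The $\frak h_I$-weights occurring in $\Res_0 V'$ are restrictions of integral weights $\lambda$ of $\frak g$ satisfying $\lambda\vert_{\frak z_I}=0$, and these form a lattice that is in general strictly smaller than the weight lattice of $\frak g_I$: the unique extension of a dominant integral weight of $\frak g_I$ to an element of $\frak h^*$ vanishing on $\frak z_I$ need not be integral for $\frak g$. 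Concretely, for $\frak g=\fsl_3$ and $I=\{s_1\}$, a weight $\lambda=c\alpha_1$ (these are exactly the weights vanishing on $\frak z_I$) is integral for $\fsl_3$ only when $c\in\bbZ$, so every $\Res_0 V'$ has all its $\frak h_I$-weights in the root lattice $\bbZ\alpha_1$ of $\frak g_I$, and the two-dimensional simple $\fsl_2$-module is never a summand of any $\Res_0 V'$.

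The corollary survives because the simple modules you fail to reach are irrelevant rather than covered: if the highest weight of a simple $V$ does not lie in $\bbZ\roots_I$ (equivalently, the zero $\frak h_I$-weight space of $V$ vanishes), then $\Supp_{\frak h_I}(V\otimes M)\subseteq\Supp_{\frak h_I}V+\Supp_{\frak h_I}M$ is disjoint from $\Supp_{\frak h_I}M\subset\bbZ\roots_I$, and likewise for $N$, so $\Hom_{\frak g_I}(V\otimes M,M)=0=\Hom_{\frak g_I}(V\otimes N,N)$ and the required isomorphism holds trivially. For the remaining simples the highest weight lies in $\bbZ\roots_I$, hence extends by zero on $\frak z_I$ to an integral (in fact root-lattice) weight of $\frak g$, and there your extremal-weight construction does produce a $V'$ with $V$ a direct summand of $\Res_0 V'$. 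This dichotomy is precisely the support argument the paper inserts before invoking the classification of finite dimensional $\frak g$-modules, and your proposal needs it to be correct.
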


\begin{proof}
	For the `only if' part, by Proposition~\ref{prop:dimequal}
	and \cite[6.8~(3)]{jantzenEinhullende} we have
	\begin{align*}
		\Hom_{\frak g}\bigl(V', \calL(\Ind_{\xi}M, \Ind_{\xi}M)\bigr)
		&\cong \Hom_{\frak g}\bigl(V'\otimes \Ind_{\xi}M, \Ind_{\xi}M\bigr) \\
		&\cong \Hom_{\frak g_I}\bigl(\Res_0V'\otimes M, M\bigr) \\
		&\cong \Hom_{\frak g_I}\bigl(\Res_0V'\otimes N, N\bigr) \\
		&\cong \Hom_{\frak g_I}\bigl(\Res_0V', \calL_I(\Ind_{\xi}N, \Ind_{\xi}N)\bigr) \\
		&\cong \Hom_{\frak g}\bigl(V', \calL(\Ind_{\xi}N, \Ind_{\xi}N)\bigr)
	\end{align*}
	for all finite dimensional $\frak g$-modules $V'$.
	Similarly, for the `if' part, we find that
	\[
		\Hom_{\frak g_I}\bigl(\Res_0V', \calL_I(M, M)\bigr)
		\cong\Hom_{\frak g_I}\bigl(\Res_0V', \calL_I(N, N)\bigr)
	\]
	for all finite dimensional $\frak g$-modules $V'$. We need to show
	that this covers all relevant finite dimensional $\frak g_I$-modules.
	We first note that
	\[
		\Hom_{\frak g_I}(V\otimes M, M) \neq 0
	\]
	only if	$V_0 \neq \{0\}$, where $V_0$ denotes the $\frak h_I$-invariant subspace
	of $V$. This follows from the fact that
	\[
		\Supp_{\frak h_I}(V\otimes M)\subseteq \Supp_{\frak h_I}V+\Supp_{\frak h_I}M
	\]
	and, since $M\in\catO_I$,
	\[
		\Supp_{\frak h_I}M\subset\bbZ \roots_I,
	\]
	while, if $V_0=\{0\}$,
	\[
		\Supp_{\frak h_I}V\cap\bbZ\roots_I=\emptyset.
	\]
	On the other hand, extending the highest weight of $V$ from $\frak g_I$
	to $\frak g$ and using the classification of finite dimensional
	$\frak g$-modules (see~\cite[Theorem~7.2.6]{dixmier}
	we have that
	if $V_0\neq\{0\}$ then there is a finite dimensional $\frak g$-module
	$V'$ such that $V$ is a direct summand of $\Res_0 V'$. Now the result
	follows by induction on the dimension of $V$.
\end{proof}

The following crucial observation is due to V.~Mazorchuk.

\begin{proposition}\label{prop:botquot}
	Kostant's problem has the positive answer for any quotient of
	$\Delta(\wIcirc\wcirc)$.
\end{proposition}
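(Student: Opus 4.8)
The plan is to reduce to the known positive answer for quotients of the \emph{dominant} Verma module $\Delta(e)$ (see~\cite[6.9]{jantzenEinhullende}) via parabolic induction, using the equivalence $\Ind_\xi\colon\catO^I_0\to\catO^\xi_0$ that identifies $\Delta_I(e)$ with $\Delta(\wIcirc\wcirc)$. The first step is to observe that any quotient $Q$ of $\Delta(\wIcirc\wcirc)$ lies in $\catO^\xi_0$, hence is of the form $\Ind_\xi Q_I$ for a unique quotient $Q_I$ of $\Delta_I(e)$; since $Q_I$ is a quotient of a dominant Verma module for $\frak g_I$, Kostant's problem has the positive answer for $Q_I$, i.e.\ the natural map $\enva{\frak g_I}/\Ann_{\enva{\frak g_I}}Q_I\to\calL_I(Q_I,Q_I)$ is onto. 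Equivalently, by the standard characterisation of the positive answer (see~\cite[6.8, 6.9]{jantzenEinhullende}), one has $\calL_I(Q_I,Q_I)\cong\calL_I(\Delta_I(e),\Delta_I(e))$ as $\enva{\frak g_I}$-bimodules, this last isomorphism being induced by the projection $\Delta_I(e)\epi Q_I$.

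The second step is to transport this bimodule isomorphism through $\Ind_\xi$. Using the chain of isomorphisms already established in Proposition~\ref{prop:dimequal} — namely $\Hom_{\frak g}(V\otimes\Ind_\xi M,\Ind_\xi N)\cong\Hom_{\frak g_I}(\Res_0V\otimes M,N)$ — together with the adjunction $\Hom_{\frak g}(V,\calL(X,X))\cong\Hom_{\frak g}(V\otimes X,X)$ from~\cite[6.8~(3)]{jantzenEinhullende}, I would show that for every finite dimensional $\frak g$-module $V$
\[
	\Hom_{\frak g}\bigl(V,\calL(\Ind_\xi Q_I,\Ind_\xi Q_I)\bigr)
	\cong\Hom_{\frak g}\bigl(V,\calL(\Ind_\xi\Delta_I(e),\Ind_\xi\Delta_I(e))\bigr),
\]
the isomorphism being functorial in $V$ and induced by the surjection $\Ind_\xi\Delta_I(e)=\Delta(\wIcirc\wcirc)\epi\Ind_\xi Q_I=Q$. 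Since both $\calL$-bimodules lie in the category of locally finite $\enva{\frak g}$-bimodules with fixed central character on one side, such a functorial isomorphism of the $\Hom$ functors on finite dimensional modules yields an isomorphism $\calL(Q,Q)\cong\calL(\Delta(\wIcirc\wcirc),\Delta(\wIcirc\wcirc))$ compatible with the bimodule structure, because a locally finite bimodule is determined by its multiplicities $\dim\Hom_{\frak g}(V,-)$ together with the bimodule action, and here the identification is induced by an honest morphism of bimodules $\calL(\Delta(\wIcirc\wcirc),\Delta(\wIcirc\wcirc))\to\calL(Q,Q)$ (restriction of maps along $\Delta(\wIcirc\wcirc)\epi Q$).

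The third step closes the argument: Kostant's problem has the positive answer for $\Delta(\wIcirc\wcirc)$ itself, since it is a quotient of the dominant Verma module $\Delta(e)$ (as $\wIcirc\wcirc\geq e$, the Verma $\Delta(\wIcirc\wcirc)$ is a submodule, hence also a quotient after applying an appropriate projective functor, or one simply invokes~\cite[6.9]{jantzenEinhullende} directly once $\Delta(\wIcirc\wcirc)$ is exhibited as such a quotient). Therefore $\enva{\frak g}/\Ann\Delta(\wIcirc\wcirc)\cong\calL(\Delta(\wIcirc\wcirc),\Delta(\wIcirc\wcirc))$, and combining with the bimodule isomorphism of the previous step and with $\Ann\Delta(\wIcirc\wcirc)=\Ann Q$ (which holds because $\Ann_{\enva{\frak g_I}}\Delta_I(e)=\Ann_{\enva{\frak g_I}}Q_I$ by~\cite[6.9]{jantzenEinhullende} again, applied to $Q_I$ as a quotient of a dominant Verma module, and then Lemma~\ref{lem:indann} lifts the equality of annihilators through $\Ind_\xi$) gives that $\enva{\frak g}/\Ann Q\to\calL(Q,Q)$ is an isomorphism. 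Hence Kostant's problem has the positive answer for $Q$.

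The main obstacle I anticipate is the passage in the second step from a functorial isomorphism of the functors $\Hom_{\frak g}(-,\calL(Q,Q))$ and $\Hom_{\frak g}(-,\calL(\Delta(\wIcirc\wcirc),\Delta(\wIcirc\wcirc)))$ on finite dimensional modules to a genuine bimodule isomorphism. The point is that the natural restriction map $\calL(\Delta(\wIcirc\wcirc),\Delta(\wIcirc\wcirc))\to\calL(Q,Q)$ need not be manifestly surjective; one must argue that the computed equality of all multiplicities $\dim\Hom_{\frak g}(V,-)$, combined with the right $\enva{\frak g}$-module structures (which are controlled by the central character of $\Delta(\wIcirc\wcirc)$ and $Q$ respectively, and these agree), forces surjectivity. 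This is exactly the kind of argument that underlies Corollary~\ref{cor:dimequal}, so the technical work will largely consist of checking that the hypotheses there — or a minor variant adapted to the case $M=Q_I$, $N=\Delta_I(e)$ — are met, using that Kostant's problem is positive for $Q_I$ to supply the needed equality $\calL_I(Q_I,Q_I)\cong\calL_I(\Delta_I(e),\Delta_I(e))$ on the $\frak g_I$ side.
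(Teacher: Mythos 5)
Your reduction to $\frak g_I$ via $\Ind_\xi$ breaks down at the very first step. The positive answer to Kostant's problem for a quotient $Q_I$ of $\Delta_I(e)$ says that $\enva{\frak g_I}/\Ann Q_I\to\calL_I(Q_I,Q_I)$ is onto; it does \emph{not} say that $\calL_I(Q_I,Q_I)\cong\calL_I(\Delta_I(e),\Delta_I(e))$. That isomorphism would force $\Ann Q_I=\Ann\Delta_I(e)$, which fails for almost every quotient: for $Q_I=L_I(e)=\bbC$ one has $\calL_I(\bbC,\bbC)=\bbC$, whereas $\calL_I(\Delta_I(e),\Delta_I(e))=\enva{\frak g_I}/\Ann\Delta_I(e)$ is infinite dimensional. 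Consequently the asserted isomorphism $\calL(Q,Q)\cong\calL(\Delta(\wIcirc\wcirc),\Delta(\wIcirc\wcirc))$ and the equality $\Ann Q=\Ann\Delta(\wIcirc\wcirc)$ are both false in general (take $Q=L(\wIcirc\wcirc)$), and the appeal to \cite[6.9]{jantzenEinhullende} for an equality of annihilators is a misreading: that result gives surjectivity onto $\calL$, not equality of annihilators. (A smaller slip: $\Delta(\wIcirc\wcirc)$ is a submodule, not a quotient, of $\Delta(e)$; Kostant positivity for it holds because it is a Verma module.)

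The deeper problem is that the induction formalism cannot, by itself, prove the proposition. Proposition~\ref{prop:dimequal} and Corollary~\ref{cor:dimequal} control the multiplicities $\dim\Hom_{\frak g}\bigl(V,\calL(Q,Q)\bigr)$ in terms of $\dim\Hom_{\frak g_I}\bigl(\Res_0V,\calL_I(Q_I,Q_I)\bigr)$, and Kostant positivity for $Q_I$ identifies the latter with multiplicities in $\enva{\frak g_I}/\Ann Q_I$. But to conclude you would also need the matching statement $\dim\Hom_{\frak g}\bigl(V,\enva{\frak g}/\Ann Q\bigr)=\dim\Hom_{\frak g_I}\bigl(\Res_0V,\enva{\frak g_I}/\Ann Q_I\bigr)$, i.e. that the primitive quotient of $\enva{\frak g}$ behaves well under induction of the ideal $\Ann Q_I$ --- and that is precisely the hard content of the proposition, not something the equivalence between $\catO^\xi_0$ and $\catO^I_0$ provides. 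The paper's proof goes an entirely different way: it uses Mazorchuk's criterion that positivity for $\Delta(\wIcirc\wcirc)/X$ follows from $\Ext^1_{\catO}\bigl(\Delta(\wIcirc\wcirc),\theta_xX\bigr)=0$ for all $x\in W$, and verifies this vanishing by passing to the derived category with completion and twisting functors and a degree argument with complexes of tilting modules. You would need an input of that kind; the argument as proposed does not close.
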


\begin{proof}
	Consider a short exact sequence
	\[
		0\rightarrow X\rightarrow \Delta(\wIcirc\wcirc)\rightarrow Y\rightarrow 0.
	\]
	By \cite[Proposition 5]{mazorchukTwistedApproach}, we need to show that
	\[
		\Ext^1_{\catO}\bigl(\Delta(\wIcirc\wcirc), \theta_xX\bigr) = 0
	\]
	for all $x\in W$. Let $C_x$ and $T_x$ denote the completion functor
	and the twisting functor associated with $x\in W$, respectively,
	and let $\calR C_x$ and $\calL T_x$ denote the corresponding right and
	left derived functors.
	They satisfy
	\[
		C_x\Delta(\wcirc)\cong\Delta(x^\inv\wcirc),\text{ and }
		T_x\Delta(x^\inv\wcirc)\cong\Delta(\wcirc),
	\]
	they form mutually inverse equivalences of
	the bounded derived category $\deriv{\catO}$, and they commute
	with projective functors (all this can be found
	in~\cite{andersenstroppel} and~\cite{khomenkomazorchuk}). Hence we have
	\begin{align*}
		\Ext^1_{\catO}\bigl(\Delta(\wIcirc\wcirc), \theta_xX\bigr)
		&\cong \Hom_{\deriv{\catO}}\bigl(\Delta(\wIcirc\wcirc)[-1], \theta_xX\bigr) \\
		&\cong \Hom_{\deriv{\catO}}\bigl(\calR C_{\wIcirc}\Delta(\wcirc)[-1], \theta_xX\bigr) \\
		&\cong \Hom_{\deriv{\catO}}\bigl(\theta_{x^\inv}\Delta(\wcirc)[-1],
			\calL T_{\wIcirc}X\bigr).
	\end{align*}

	To study $\calL T_{\wIcirc}X$,
	we note that $X\in\catO^\xi_0$, and take a projective resolution
	\[
		P^\bullet\epi X:\qquad0\rightarrow P_k\rightarrow\cdots
			\rightarrow P_1\rightarrow P_0\rightarrow X\rightarrow 0,
	\]
	of $X$ in this category.
	
	In $\deriv{\catO}$ we now have $X\cong P^\bullet$.
	Since $\catO^\xi_0$ is equivalent to
	$\catO^I_0$, since all projective modules in $\catO^I_0$ have Verma flags,
	and since the equivalence maps Verma modules to Verma modules, the
	modules in $P^\bullet$ have Verma flags. Since $T_{\wIcirc}$
	is acyclic on such modules we have $\calL T_{\wIcirc}P^\bullet = T_{\wIcirc}P^\bullet$,
	and hence we have
	\begin{multline*}
		\Hom_{\deriv{\catO}}\bigl(\theta_{x^\inv}\Delta(\wcirc)[-1], \calL T_{\wIcirc}X\bigr) \\
		\cong
		\Hom_{\deriv{\catO}}\bigl(\theta_{x^\inv}\Delta(\wcirc)[-1], T_{\wIcirc}P^\bullet\bigr).
	\end{multline*}
	For $x\in W_I$, let $\tilde P(x\wIcirc\wcirc)$ denote
	the projective cover of the simple $L(x\wIcirc\wcirc)$ in
	$\catO^{\leqslant\lambda}_0$. We have that
	\[
		\Delta(\wIcirc\wcirc) = \tilde P(\wIcirc\wcirc),
	\]
	and, analogous to $\catO_0$, for each $x\in W_I$
	there is a projective functor $\tilde\theta_x$ such that
	\[
		\tilde P(x\wIcirc\wcirc)\cong \tilde\theta_x\Delta(\wIcirc\wcirc).
	\]
	Since twisting functors commute with projective functors
	we have
	\[
		T_{\wIcirc}\tilde P(x\wIcirc\wcirc)
		\cong T_{\wIcirc}\tilde\theta_x\Delta(\wIcirc\wcirc)
		\cong \tilde\theta_x T_{\wIcirc}\Delta(\wIcirc\wcirc)
		\cong \tilde\theta_x \Delta(\wcirc).
	\]
 	Since $\Delta(\wcirc)$ is a tilting module, and projective functors
	take tilting modules to tilting modules, we have that
	$T_{\wIcirc}\tilde P(x\wIcirc\wcirc)$ is a tilting module for all
	$x\in W_I$.
	In particular, $T_{\wIcirc}P^\bullet$ is a complex of tilting modules.
	Similarly, $\theta_{x^\inv}\Delta(\wcirc)$ is a tilting module,
	and hence we have
	\begin{multline*}
		\Hom_{\deriv{\catO}}\bigl(\theta_{x^\inv}\Delta(\wcirc)[-1], T_{\wIcirc}P^\bullet\bigr)\\
		\cong \Hom_{\homot{\catO}}\bigl(\theta_{x^\inv}\Delta(\wcirc)[-1],
				T_{\wIcirc}P^\bullet\bigr),
	\end{multline*}
	by~\cite[Chapter III(2), Lemma~2.1]{happel}, where $\homot{\catO}$ is
	the bounded homotopy category.
	Since $\theta_{x^\inv}\Delta(\wcirc)[-1]$ is concentrated in position $1$,
	and $\calL T_{\wIcirc}P^\bullet$ lies between position $0$ and $-k$, this last
	$\Hom$-space must be zero.
\end{proof}

We can now put the above results together to prove Theorem~\ref{thm:main}.

\begin{proof}[Proof of Theorem~\ref{thm:main}]
	By~\cite[Lemma~3.3]{josephGoldieRankOne},
	there is a (unique)
	quotient $D$ of $\Delta_I(e)$ satisfying
	$\Ann L_I(x)=\Ann D$, and Kostant's problem
	has the positive answer for $D$, since $D$ is a quotient
	of the dominant Verma module (see for example, \cite[6.9]{jantzenModuln}).
	Hence we have
	\begin{equation}\label{eq:lindinjfirst}
		\calL_I(D, D)\cong\enva{\frak g_I}/\Ann D \cong
		\enva{\frak g_I}/\Ann L_I(x)\hookrightarrow\calL_I\bigl(L_I(x), L_I(x)\bigr).
	\end{equation}
	Furthermore, since $L(x\wIcirc\wcirc)\cong \Ind_\xi L_I(x)$ we have
	\[
		\Ann L(x\wIcirc\wcirc) = \Ann\Ind_\xi D
	\]
	by Lemma~\ref{lem:indann}.
	Since $\Ind_\xi D$ is a quotient of
	$\Ind_\xi\Delta_I(\wIcirc)\cong\Delta(\wIcirc\wcirc)$, Kostant's
	problem has the positive answer for $\Ind_\xi D$ by Proposition~\ref{prop:botquot}.
	As above, we have
	\begin{equation}\label{eq:lindinj}
		\calL\bigl(\Ind_\xi D, \Ind_\xi D\bigr)
		\hookrightarrow\calL\bigl(L(x\wIcirc\wcirc), L(x\wIcirc\wcirc)\bigr).
	\end{equation}
	
	If Kostant's problem has the positive answer for $L(x)$ then the
	injection~\eqref{eq:lindinjfirst} is a bijection, so by
	Corollary~\ref{cor:dimequal} we have
	\[
		\Hom_{\frak g}\Bigl(V, \calL\bigl(\Ind_\xi D, \Ind_\xi D\bigr)\Bigr)
		\cong \Hom_{\frak g}\Bigl(V, \calL\bigl(L(x\wIcirc\wcirc), L(x\wIcirc\wcirc)\bigr)\Bigr)
	\]
	for all finite dimensional $\frak g$-modules $V$. Hence the
	injection~\eqref{eq:lindinj} is a bijection, and Kostant's problem has the
	positive answer for $L(x\wIcirc\wcirc)$. The proof of the converse is
	completely analogous.
\end{proof}

\section{Alternative description of $D$}\label{sec:D}

The module $D$ used in the proof of Theorem~\ref{thm:main} can be described as follows.
If we set $J=\Ann L(x)$, then by \cite[Lemma~3.3]{josephGoldieRankOne},
$J\Delta(e)$ is the unique submodule of $\Delta(e)$ satisfying
\[
	\Ann\bigl(\Delta(e)/J\Delta(e)\bigr) = \Ann L(x).
\]
In particular, $D\defeq \Delta(e)/J\Delta(e)$ is the unique
quotient of $\Delta(e)$ satisfying $\Ann D=\Ann L(x)$.

When beginning this work, the author used a more direct approach
to find the module $D$, inspired by ideas in~\cite{kahrstrommazorchuk}.
Although not necessary for the current exposition, the following result
is interesting in its own right. 

\begin{proposition}\label{prop:dquasi}
	Let $x\in W$. The unique quotient $D$ of $\Delta(e)$
	satisfying $\Ann D=\Ann L(x)$ is isomorphic to the image of a
	non-zero homomorphism
	\[
		\Delta(e)\rightarrow\theta_{x}L(x^\inv).
	\]
\end{proposition}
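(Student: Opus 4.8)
The plan is to identify $D=\Delta(e)/J\Delta(e)$ (with $J=\Ann L(x)$) with the image of a suitable map $\Delta(e)\to\theta_x L(x^\inv)$ by matching annihilators and using the uniqueness of $D$ as a quotient of $\Delta(e)$. First I would record the basic facts about the module $\theta_x L(x^\inv)$: since $L(x^\inv)$ is a quotient of $\Delta(x^\inv)$ and $\theta_x$ is exact, $\theta_x L(x^\inv)$ is a quotient of $\theta_x\Delta(x^\inv)=P(x)$; moreover, by the theory of projective functors, $\theta_x\Delta(e)=P(x)$ already, so there is a natural surjection $\theta_x\Delta(e)\epi\theta_x L(x^\inv)$. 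The key point is that the simple top of $\theta_x L(x^\inv)$ is $L(e)$: indeed $[\theta_x L(x^\inv):L(e)]=\dim\Hom_{\catO}(P(e),\theta_x L(x^\inv))=\dim\Hom_{\catO}(\theta_{x^\inv}P(e),L(x^\inv))=\dim\Hom_{\catO}(P(x^\inv),L(x^\inv))=1$, where I used adjointness $(\theta_x)^*=\theta_{x^\inv}$ and $\theta_{x^\inv}P(e)=\theta_{x^\inv}\Delta(e)=P(x^\inv)$. Because $\theta_x L(x^\inv)$ has $L(e)$ in its top with multiplicity one, every nonzero map $\Delta(e)\to\theta_x L(x^\inv)$ has the same image $D'$, namely the submodule generated by a highest weight vector of weight $0$; this gives a well-defined module $D'$, a quotient of $\Delta(e)$.

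Next I would compute the annihilator of $D'$. On one hand, since $D'$ is the image of $\Delta(e)$ in $\theta_x L(x^\inv)$, we have $\Ann D'\supseteq\Ann\bigl(\theta_x L(x^\inv)\bigr)$; conversely, since $\theta_x L(x^\inv)$ is a quotient of $P(x)=\theta_x\Delta(e)$ which is generated by its weight-$0$ component as a $\enva{\frak g}$-module, and $D'$ is precisely the submodule generated by the weight-$0$ space, one sees $\Ann D'=\Ann\bigl(\theta_x L(x^\inv)\bigr)$ — more carefully, any $u\in\enva{\frak g}$ killing $D'$ kills the cyclic generator, hence kills its image under each $\theta_x$-component of $P(x)$, hence kills all of $\theta_x L(x^\inv)$ since that module is generated over $\enva{\frak g}$ by the image of the weight-$0$ generator of $P(x)$. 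So it remains to show $\Ann\bigl(\theta_x L(x^\inv)\bigr)=\Ann L(x)$. The inclusion $\Ann\bigl(\theta_x L(x^\inv)\bigr)\supseteq\Ann L(x)$ is a standard property of projective functors: $\theta_x$ does not enlarge primitive ideals, and more precisely $\Ann\bigl(\theta_x M\bigr)\supseteq\Ann M'$ whenever $L(x)$ appears — here I would invoke the fact (e.g. from Joseph, or from the bimodule interpretation $\theta_x M\cong L(\Delta(x),\Delta(e))\otimes_{\enva{\frak g}}M$ combined with $\Ann L(x^\inv)$ being the ``correct'' two-sided ideal) that $\Ann\theta_x L(x^\inv)=\Ann L(x)$, using that $x$ and $x^\inv$ give the same two-sided cell and that $\theta_x$ realizes the appropriate translation on annihilators. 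I would cite the relevant statement, presumably from~\cite{josephKostant} or the literature on the Kazhdan--Lusztig combinatorics of primitive ideals.

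Having shown $\Ann D'=\Ann L(x)$, I would conclude: $D'$ is a quotient of $\Delta(e)$ with $\Ann D'=\Ann L(x)$, and by the uniqueness statement recalled just above the proposition (from~\cite[Lemma~3.3]{josephGoldieRankOne}), the quotient of $\Delta(e)$ with this annihilator is unique, hence $D'\cong D$. Finally I should check a nonzero map $\Delta(e)\to\theta_x L(x^\inv)$ exists at all, which follows since $\Hom_{\catO}(\Delta(e),\theta_x L(x^\inv))\cong\Hom_{\catO}(\theta_{x^\inv}\Delta(e),L(x^\inv))=\Hom_{\catO}(P(x^\inv),L(x^\inv))\neq 0$.

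I expect the main obstacle to be the identification $\Ann\bigl(\theta_x L(x^\inv)\bigr)=\Ann L(x)$. The inclusion ``$\supseteq$'' direction (that $\Ann L(x)$ annihilates $\theta_x L(x^\inv)$) is the subtle one and needs either a careful bimodule computation or a citation to the structure theory of primitive ideals under projective/translation functors; the reverse inclusion is easier since $L(x)$ is a subquotient of $\theta_x L(x^\inv)$ (it occurs because $[\theta_x L(x^\inv):L(x)]=\dim\Hom(\theta_{x^\inv}P(x),L(x^\inv))\ge\dim\Hom(P(x^\inv),L(x^\inv))=1$ after checking $\theta_{x^\inv}P(x)$ has $P(x^\inv)$ as a summand), giving $\Ann\bigl(\theta_x L(x^\inv)\bigr)\subseteq\Ann L(x)$ immediately. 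Everything else is bookkeeping with adjunctions and the cyclicity of $P(x)$ in weight $0$.
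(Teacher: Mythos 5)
Your opening and closing steps are fine and agree with the paper: $\dim\Hom_{\frak g}\bigl(\Delta(e),\theta_x L(x^\inv)\bigr)=1$ by adjunction, so the image is well defined, and once one knows its annihilator equals $\Ann L(x)$ the uniqueness of $D$ finishes the argument. The problem is the middle. First, $\theta_x\Delta(x^\inv)\neq P(x)$ in general (the defining property is $\theta_x\Delta(e)=P(x)$; already for $x=s_1s_2$ in type $A_2$ one has $[\theta_x\Delta(x^\inv)]=H_{x^\inv}\underline H_x\neq\underline H_x$), and there is no surjection $\Delta(e)\epi L(x^\inv)$ to push through $\theta_x$, so the surjection $P(x)\epi\theta_x L(x^\inv)$ on which you build the rest does not exist; consequently the claim that $\theta_x L(x^\inv)$ is generated by the image of a weight-$0$ vector, hence that $\Ann D'=\Ann\bigl(\theta_x L(x^\inv)\bigr)$, is unsupported. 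Second, and more seriously, the identity $\Ann\bigl(\theta_x L(x^\inv)\bigr)=\Ann L(x)$ that you defer to a citation is not the right statement, and your ``easy direction'' fails: all composition factors of $\theta_x L(x^\inv)$ are of the form $L(y)$ with $y\leqslant_R x^\inv$, so $L(x)$ can occur only if $x\sim_R x^\inv$, which in type $A$ forces $x$ to be an involution. (Concretely, for $x=s_1s_2$ in $\fsl_3$ the coefficient of $\underline H_{x^\inv}$ in $\underline H_x\underline H_{x^\inv}=(v+v^\inv)(\underline H_{s_1s_2s_1}+\underline H_{s_1})$ is zero, so $[\theta_x L(x^\inv):L(x)]=0$; likewise $P(x^\inv)$ is not a summand of $\theta_{x^\inv}P(x)$ there.)

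The idea you are missing is the one the paper's proof turns on: the relevant simple is not $L(x)$ but $L(d)$, where $d$ is the Duflo involution of the right cell of $x^\inv$; it satisfies $\Ann L(d)=\Ann L(x)$ because $d=d^\inv\sim_L x$. The paper works in the graded category: from the genuine surjection $\theta_x P(x^\inv)\epi\theta_x L(x^\inv)$ together with self-duality of $\theta_x L(x^\inv)$ one sees that this module lives in degrees between $-\luszta(x^\inv)$ and $\luszta(x^\inv)$ and that all its simple submodules are of the form $L(y)$ with $y\sim_R x^\inv$; comparing with the graded multiplicities $h_{e,y}$ of $\Delta(e)$, the only such factor available to the image $\bar D$ in degree at most $\luszta(x^\inv)$ is a single copy of $L(d)$. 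Hence $\bar D$ has simple socle $L(d)$ and all other composition factors $L(y)$ with $y<_R d$, and Joseph's criterion for quasi-simple modules gives $\Ann\bar D=\Ann L(d)=\Ann L(x)$ without ever computing $\Ann\bigl(\theta_x L(x^\inv)\bigr)$. To repair your write-up you would need to replace the whole annihilator computation by an argument of this kind.
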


We first note that this image is uniquely defined, since
\begin{align*}
	\dim\Hom_{\frak g}\bigl(\Delta(e), \theta_{x}L(x^\inv)\bigr)
	&= \dim\Hom_{\frak g}\bigl(\theta_{x^\inv}\Delta(e), L(x^\inv)\bigr) \\
	&= \dim\Hom_{\frak g}\bigl(P(x^\inv), L(x^\inv)\bigr) \\
	&= 1.
\end{align*}
To prove Proposition~\ref{prop:dquasi} we need to recall some further theory.

The category $\catO_0$ has a $\bbZ$-graded version
$\catO_0^\bbZ$, in which the modules $L(x)$, $\Delta(x)$ and $P(x)$, for
$x\in W$, all have standard graded lifts (where their heads are
concentrated in degree zero). Furthermore, the projective functors
$\theta_x$, $x\in W$, also have graded lifts, see~\cite{stroppelTranslationFunctors}.
For $M\in\catO_0^\bbZ$
and $i\in\bbZ$,
let $M\langle i\rangle$ denote the graded module defined
by $M\langle i\rangle_j \defeq M_{j-i}$.

The Grothendieck group
of $\catO_0^\bbZ$ is isomorphic to the Hecke algebra $\calH$ of W, i.e. the
free $\bbZ[v, v^\inv]$-module over the basis $\{\,H_x\,\vert\,x\in W\,\}$, where
multiplication is given by $H_xH_y = H_{xy}$ if $\ell(xy)=\ell(x)+\ell(y)$,
and $H_sH_s = H_e + (v^\inv-v)H_s$ for simple reflections $s\in S$.
The Kazhdan-Lusztig basis is a basis of the Hecke algebra,
whose elements we denote by $\underline H_x$, which are self dual
under the duality $H\mapsto\overline{H}$ on $\calH$ given by
$\overline H_x = (H_{x^\inv})^\inv$ and $\overline v = v^\inv$.
We also have the dual Kazhdan-Lusztig basis, whose elements we denote by
$\hat{\underline H}_x$, which is dual to the Kazhdan-Lusztig basis with
respect to the symmetrising trace. We then have
\begin{align*}
	[\Delta(x)] &= H_x, \\
	[P(x)] &= \underline H_x, \\
	[L(x)] &= \hat{\underline H}_x, \\
	[\theta_x\underline{\phantom M}] &=
		\text{ right multiplication by $\underline H_x$, and} \\
	[\underline{\phantom M}\langle i\rangle] &= \text{ multiplication by $v^{-i}.$}
\end{align*}
For a review of this theory, see~\cite{mazorchukstroppelInducedCellModules},
in particular Section~3.

For $x, y\in W$ and $H\in\calH$ let $k^H_{x, y}\in\bbZ[v, v^\inv]$ be such that
\[
	\underline H_xH = \sum_{y\in W}k^H_{x, y}\underline H_y.
\]
The \emph{right preorder} on $W$ is defined by $x\leqslant_R y$ if there exists
an $H\in\calH$ with $k^H_{x, y}\neq 0$. Dually, if
$\hat k^H_{x, y}\in\bbZ[v, v^\inv]$ is such that
\[
	H\underline{\hat H}_x = \sum_{y\in W}\hat k^H_{x, y}\underline{\hat H}_y,
\]
then $x\geqslant_R y$ if and only if there exists a $H\in\calH$
with $\hat k^H_{x, y}\neq 0$ (see~\cite[5.1.16]{lusztigReductive}).
The \emph{left preorder} is defined by $x\leqslant_L y$ if
and only if $x^\inv\leqslant_R y^\inv$.
By~\cite{josephCharacteristicVariety, vogan, kl} we have the important
fact that
\[
	x\leqslant_L y \text{ if and only if } \Ann L(x)\supseteq\Ann L(y).
\]
The
equivalence classes of $\leqslant_R$ and $\leqslant_L$
are called right and left cells, respectively.

For $x, y\in W$, let $h_{x, y}\in\bbZ[v, v^\inv]$ with
\[
	\underline H_y = \sum_{x\in W} h_{x, y} H_x,
\]
and for $x, y, z\in W$, let $k_{x, y, z}\in\bbZ[v, v^\inv]$ with
\[
	\underline H_x\underline H_y =
	\sum_{z\in W}k_{x, y, z}\underline H_z.
\]
Note in particular that $\overline k_{x, y, z}=k_{x, y, z}$.
Now Lusztig's $\luszta$-function on $W$ (see~\cite{lusztigCells})
can be defined as
\[
	\luszta(x) \defeq \max_{y, z\in W}\deg k_{y, z, x}.
\]
It is constant on right cells, and in general
we have (see~\cite[1.3(1)]{lusztigCellsTwo})
\[
	\luszta(x)\leq \mindeg h_{e, x},
\]
where, for $f\in\bbZ[v, v^\inv]$,
$\mindeg f$ is the \emph{minimal degree} of $f$, i.e. the minimal element
$i\in\bbZ$ such that the coefficient of $v^i$ in $f$ is non-zero.
The \emph{Duflo set} $\calD$ (sometimes
called the set of distinguished involutions) is defined
as the set of elements $d\in W$ satisfying
\[
	\luszta(d)=\mindeg h_{e, d}.
\]
By~\cite[Proposition~1.4, Theorem~1.10]{lusztigCellsTwo}, each right
cell contains precisely one Duflo involution. Note that, by the BGG reciprocity,
we have
\[
	[\Delta(e)] = \sum_{x\in W}h_{e, x}[L(x)].
\]
Hence, given a right cell $R$ of $W$, all composition factors on the form
$L(x)$, $x\in R$ of $\Delta(e)$ occur in degree at least $\luszta(x)$,
and there is precisely one such element which occur in degree $\luszta(x)$,
namely the Duflo involution in $R$.

\begin{proof}[Proof of~Proposition~\ref{prop:dquasi}]
	Fix $x\in W$ and denote the image of a non-zero homomorphism from $\Delta(e)$
	to $\theta_xL(x^\inv)$ by $\bar D$. Since $\theta_x$ is exact, applying
	it to
	\[
		P(x^\inv)\epi L(x^\inv)
	\]
	gives
	\begin{equation}\label{eq:pontol}
		\theta_xP(x^\inv)\epi \theta_xL(x^\inv).
	\end{equation}
	
	Firstly, we have, for some $\hat k_{x^\inv, x, z}\in\bbZ[v, v^\inv]$,
	\[
		\bigl[\theta_xL(x^\inv)\bigr] = \hat{\underline H}_{x^\inv}\underline H_x
		= \sum_{z\in W}\hat k_{x^\inv, x, z}\hat{\underline H}_z
		= \sum_{z\in W}\hat k_{x^\inv, x, z}[L(z)],
	\]
	and $k_{x^\inv, x, z}\neq 0$ implies $z\leqslant_R x^\inv$
	so all composition factors of $\theta_xL(x^\inv)$ are on the form
	$L(y)$, where $y\leqslant_R x^\inv$.
	On the other hand, we have
	\[
		\bigl[\theta_xP(x^\inv)\bigr] = \underline H_{x^\inv}\underline H_x
		= \sum_{z\in W}k_{x^\inv, x, z}\underline H_z
		= \sum_{z\in W}k_{x^\inv, x, z}[P(z)],
	\]
	and $k_{x^\inv, x, z}\neq 0$ implies $z\geqslant_R x^\inv$. Hence the head of
	$\theta_xP(x^\inv)$ has only simple factors on the form $L(y)$, $y\geqslant_R x^\inv$.
	From~\eqref{eq:pontol} it follows that
	$\theta_xL(x^\inv)$ has minimal degree greater than or equal to
	$-\luszta(x^\inv)$, and that the head of $\theta_xL(x^\inv)$ has
	only simple factors on the form $L(y)$, $y\sim_R x^\inv$. Furthermore, since
	$\theta_xL(x^\inv)$ is self-dual, $\theta_xL(x^\inv)$ has maximal degree
	smaller than or equal to $\luszta(x^\inv)$, and all its simple submodules
	are on the form $L(y)$, $y\sim_Rx^\inv$.
	
	In particular, the maximal degree of $\bar D$ is bounded by $\luszta(x^\inv)$,
	and all simple submodules of $\bar D$ are on the form $L(y)$, $y\sim_Rx^\inv$.
	But the only such submodule occurring on degree $\luszta(x^\inv)$
	or smaller in $\Delta(e)$ is $L(d)$, where $d$ is the unique Duflo involution
	in the same right cell as $x^\inv$,
	occurring precisely once in degree $\luszta(x^\inv)$.
	Hence $\bar D$ has the unique simple submodule $L(d)$, and all other
	simple composition factors are on the form $L(y)$, $y<_Rd$.
	By~\cite[Proposition 6.2 (ii)]{josephKostant} it follows that
	$\Ann\bar D=\Ann L(d)$, and $\Ann L(d)=\Ann L(x)$
	as $d\sim_L x$. Since $D$ is the unique
	quotient of $\Delta(e)$ with this property, we must have $\bar D=D$.
\end{proof}

\section{Kostant's problem for $\mathfrak{sl}_6$}\label{sec:sl6}

In~\cite{kahrstrommazorchuk}, the answer to Kostant's problem was given
for all simple modules in $\catO_0$ for $\mathfrak{sl}_n$, $n\leq 5$,
and partial results were obtained for $\mathfrak{sl}_6$.
In type $A$ the answer to Kostant's problem is a left cell invariant by
\cite[Theorem~60]{mazorchukstroppelInducedCellModules}. Furthermore, since in
type $A$ there is one unique involution in each left cell, it suffices
to solve Kostant's problem for involutions. The Weyl group for $\mathfrak{sl}_6$
is $S_6$, which contains 76 involutions. For 45 of these Kostant's problem
were shown to have the positive answer, for 17 the answer was negative, and for 11
it remained unknown.

We expected that Theorem~\ref{thm:main} would answer many of these 11 unknown cases,
but it actually turned out to answer only two. The involution
$s_1s_2s_1s_5$ is in the same left cell as the element
\[
	s_1s_4\cdot \wIcirc\wcirc,
\]
where $I=\{s_1, s_2, s_3, s_4\}$. By~\cite[Corollary~21]{kahrstrommazorchuk},
Kostant's problem has the positive answer for the $\frak{sl}_5$-module
$L(s_1s_4)$, and hence by Theorem~\ref{thm:main} Kostant's problem has
the positive answer for the $\frak{sl}_6$-module $L(s_1s_2s_1s_5)$.
By symmetry of the Coxeter diagram,
Kostant's problem also has the positive answer for $L(s_1s_4s_5s_4)$.
Hence answer to Kostant's problem is still unknown for the modules
\begin{align*}
&L(s_2s_3s_4s_3s_2), &&L(s_2s_1s_4s_3s_2s_5s_4), &&L(s_1s_3s_2s_4s_3s_2s_1s_5s_4s_3),\\
&L(s_2s_1s_3s_4s_3s_2), &&L(s_1s_2s_3s_2s_4s_3s_2s_1), &&L(s_2s_1s_3s_2s_1s_4s_5s_4s_3s_2),\\
&L(s_2s_4s_3s_2s_5s_4), &&L(s_2s_3s_2s_4s_5s_4s_3s_2), &&L(s_2s_1s_3s_2s_4s_3s_2s_1s_5s_4s_3s_2).
\end{align*}

\vspace{1cm}

\noindent 
Contact: Department of Mathematics, Uppsala University, SE-751 06, Uppsala, 
SWEDEN, e-mail: {\small \tt johank@math.uu.se}, \\
web: http://www.math.uu.se/$\tilde{\hspace{1mm}}$johank/

\end{document}